\newcommand{\bbC}{\mathbb{C}}
\newcommand{\bbP}{\mathbb{P}}
\newcommand{\Aut}{\textup{Aut}}
\newcommand\Pic{{\text{Pic}}}
\newtheorem{theorem}{Theorem}[section]
\newtheorem{lemma}[theorem]{Lemma}
\newtheorem{proposition}[theorem]{Proposition}
\newtheorem{corollary}[theorem]{Corollary}
\theoremstyle{definition}     
\theoremstyle{remark}
\newtheorem{remark}[theorem]{Remark}
\numberwithin{equation}{section}
\begin{document}
\title[Vanishing theorem on fake projective planes]
{A vanishing theorem on fake projective planes with enough automorphisms}

\author[J. Keum]{JongHae Keum }
\address{School of Mathematics, Korea Institute for Advanced
Study, Seoul 130-722, Korea } \email{jhkeum@kias.re.kr}
\thanks{Research supported by NRF}
\subjclass[2000]{14J29, 14F05}

\date{July 8, 2013}
\begin{abstract}  For every fake projective plane $X$ with automorphism group of order 21, we prove that
$H^i(X, 2L)=0$ for all $i$ and for every ample line bundle $L$ with $L^2=1$. 
For every fake projective plane with automorphism group of order 9, we prove  the same vanishing for every cubic root (and its twist by a 2-torsion) of the canonical bundle $K$. As an immediate consequence, there are exceptional sequences of length 3 on such fake projective planes.
\end{abstract}

\maketitle

A compact complex surface with the same Betti numbers as the
complex projective plane $\bf{P}_{\bbC}^2$ is  called {\it a fake projective plane} if it is not
isomorphic to  $\bf{P}_{\bbC}^2$.
The canonical bundle of a fake projective plane is ample. So a
fake projective plane is nothing but a surface of general type
with $p_g=0$ and $c_1^2=3c_2=9$. Furthermore, its universal cover
is the unit 2-ball in $\bbC^2$ by \cite{Aubin} and \cite{Yau} and its fundamental group is a co-compact arithmetic subgroup of PU$(2,1)$ by \cite{Kl}.

Recently, Prasad and Yeung \cite{PY} classified all possible
fundamental groups of fake projective planes. Their proof also shows that the automorphism group of a fake projective plane has order 1, 3, 9, 7, or 21. Then Cartwright and Steger (\cite{CS},  \cite{CS2}) have carried out group theoretic
enumeration based on computer to obtain more precise result: there are exactly 50 distinct fundamental groups, each corresponding to a pair of fake projective planes, complex conjugate to each other. They also have computed the automorphism groups of all fake projective planes $X$. In particular
$$\Aut(X)\cong \{1\},\,\, C_3,\,\, C_3^2\,\, {\rm or}\,\, 7:3,$$   where $C_n$ is the cyclic group of order $n$ and $7:3$ is the unique non-abelian group of
order $21$. Among the 50 pairs 34 admit a non-trivial group of automorphisms: 3 pairs have $\Aut\cong 7:3$, 3 pairs have $\Aut\cong C_3^2$ and 28 pairs have $\Aut\cong C_3$. For each pair of fake projective planes Cartwright and Steger \cite{CS2} have also computed the torsion group $H_1(X, \mathbb{Z})=$Tor$(H^2(X, \mathbb{Z}))=$Tor$(\Pic(X))$, which is the abelianization of the fundamental group. According to their computation a  fake projective plane with more than 3 automorphisms has no 3-torsion.

It can be shown (Lemma \ref{L0}) that if a fake projective plane $X$ has no 3-torsion in $H_1(X, \mathbb{Z})$, then the canonical class $K_X$ is divisible by 3 and has a unique cubic root, i.e., a unique line bundle $L_0$, up to isomorphism, such that $3L_0\cong K_X$. Its isomorphism class $[L_0]$ is fixed by $\Aut(X)$, since $\Aut(X)$ fixes the canonical class. 

For a fake projective plane $X$ an ample line bundle $L$ is called an {\it ample generator} if its isomorphism class $[L]$ generates  $\Pic(X)$ modulo torsion, or equivalently if the self-intersection number $L^2=1$. Any two ample generators differ by a torsion. We will use additive notation for tensoring line bundles, e.g.
$$L+M:=L\otimes M,\,\,\, mL:=L^{{\otimes}m},\,\,\,-L:=L^{-1}.$$

\begin{theorem}\label{main} Let $X$ be a fake projective plane with $\Aut(X)\cong 7:3$.  Then for every ample generator $L$ of $\Pic(X)$ and for any $i$ we have the vanishing
$$H^i(X, 2L)=0.$$ Moreover, for any $i$ and for the cubic root $L_0$ of $K_X$
$$H^i(X, 2L_0)=H^i(X, L_0)=0.$$ 
\end{theorem}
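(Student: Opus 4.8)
The plan is to reduce the theorem to the single vanishing $H^0(X,2L_0)=0$ and then extract that from the action of $G:=\Aut(X)\cong 7:3$.

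\emph{Reductions.} Riemann--Roch on $X$ (using $\chi(\calO_X)=1$, $K_X^2=9$, $L^2=L_0^2=1$, $K_X\equiv 3L$) gives $\chi(X,2L)=\chi(X,2L_0)=\chi(X,L_0)=0$. For these three fake projective planes $H_1(X,\bbZ)$ is a $2$-group (Cartwright--Steger's computation), so $2L=2L_0$ for every ample generator $L$ and the first assertion is the case $L=L_0$. Serre duality gives $H^2(X,2L_0)\cong H^0(X,L_0)^\vee$, $H^2(X,L_0)\cong H^0(X,2L_0)^\vee$ and $H^1(X,2L_0)\cong H^1(X,L_0)^\vee$, and cubing a section shows $H^0(X,L_0)=0$: a nonzero $s$ would give a nonzero $s^{3}\in H^0(X,3L_0)=H^0(X,K_X)$, against $p_g(X)=0$. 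Hence $H^2(X,2L_0)=0$, so if $H^0(X,2L_0)=0$ then $\chi=0$ forces $H^1(X,2L_0)=0$, and then $H^2(X,L_0)\cong H^0(X,2L_0)^\vee=0$ together with $\chi(X,L_0)=0$ gives $H^1(X,L_0)=0$. Thus the theorem follows from $H^0(X,2L_0)=0$. Finally $4L_0=K_X+L_0$ with $L_0$ ample, so Kodaira vanishing gives $h^0(X,4L_0)=\chi(X,4L_0)=3$.

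\emph{Using the group.} Suppose $H^0(X,2L_0)\neq 0$. Since $[L_0]$ is $G$-fixed, $H^0(X,2L_0)$ and $H^0(X,4L_0)$ are $G$-modules and the multiplication $\mu$ from the symmetric square of $H^0(X,2L_0)$ to $H^0(X,4L_0)$ is $G$-equivariant. Fixing a nonzero section $s$, multiplication by $s$ embeds $H^0(X,2L_0)$ into $H^0(X,4L_0)$, so $h^0(X,2L_0)\le 3$; and $\mu$ restricted to the symmetric square of any $3$-dimensional irreducible summand $M$ of $H^0(X,2L_0)$ is nonzero (it sends $s\odot s\mapsto s^{2}\neq 0$), while that symmetric square is $V\oplus V'$ (the two $3$-dimensional irreducibles), so a $3$-dimensional summand of $H^0(X,2L_0)$ would force $H^0(X,4L_0)$ to contain $V$ or $V'$. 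I will rule that out by a fixed-point computation. Let $\sigma$ generate the normal $C_7\le G$ and $\rho$ have order $3$: the topological Lefschetz number of $\sigma$ is $3$ (the $G$-action on $H^*(X,\bbQ)$ is trivial) and $X$, being a ball quotient, carries no rational or elliptic curve, so $\mathrm{Fix}(\sigma)$ is three isolated points $p_1,p_2,p_3$; since $G$ has no faithful $2$-dimensional representation there is no $G$-fixed point, so $\rho$ permutes the $p_j$ cyclically, whence the tangent weights of $\sigma$ are $(a,b),(2a,2b),(4a,4b)$ for some $a,b\in(\bbZ/7)^\times$. As $[G,G]=\langle\sigma\rangle$, the weight of $\sigma$ on any $G$-linearized fibre is canonical, and $K_X=3L_0$ then determines the fibres of $L_0$ and $4L_0$ at the $p_j$; the holomorphic Lefschetz fixed-point formula for $\calO_X$ and $\Omega^1_X$ (whose cohomology is completely known, with trivial $G$-action) pins down $\{a,b\}$, and the same formula for $4L_0$ (with $H^{>0}=0$ by Kodaira) computes $H^0(X,4L_0)$ as a $G$-module and shows it has no $3$-dimensional constituent. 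Therefore $H^0(X,2L_0)$ is a sum of characters of $G$; picking a nonzero section spanning a $G$-eigenline, its zero divisor $D\in|2L_0|$ is $G$-invariant with $D\equiv 2L$.

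\emph{The obstacle.} It remains to exclude a $G$-invariant effective $D\equiv 2L$, and this is the heart of the argument. Either $D$ is irreducible of arithmetic genus $6$, or $D=C_1+C_2$ with each $C_i$ irreducible, $\equiv L$, of arithmetic genus $3$, in which case $G$ fixes each $C_i$ because $G$ has no subgroup of index $2$. A $G$-invariant curve meets the single orbit $\{p_1,p_2,p_3\}$ in all three points or in none, and ``none'' makes $\sigma$ act freely on the normalization, forcing its geometric genus to be $\equiv 1\pmod 7$ --- impossible, since a curve on a ball quotient has geometric genus $\ge 2$ and here $\le 6$. So $D$ passes through $p_1,p_2,p_3$. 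Applying Riemann--Hurwitz to the normalization modulo $\langle\sigma\rangle$ and modulo $G$, together with the bound $\sum_P\delta_P=p_a(D)-g(\widetilde D)$ on the total $\delta$-invariant, forces $D$ to be unibranch at each $p_j$ with prescribed geometric genus and singularities; the finitely many surviving configurations are excluded using the explicit weights $(a,b)$ at the $p_j$ and the structure of $X/\langle\sigma\rangle$ --- a $\bbQ$-homology projective plane with three cyclic quotient singularities of the computed type --- whose numerical invariants leave no room for the image of $D$. This contradiction yields $H^0(X,2L_0)=0$ and completes the proof.
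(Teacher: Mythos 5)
Your reductions are sound and essentially coincide with the paper's Lemmas \ref{H0}--\ref{4L}: everything hinges on $H^0(X,2L_0)=0$, and $h^0(X,4L_0)=3$ is the numerical lever. Note, though, that you work much harder than necessary to produce an invariant divisor: since $\sigma_7^*(2L)\cong 2L$ (the torsion is a $2$-group), the finite-order action of $\sigma_7^*$ on ${\bf P}H^0(X,2L)$ already has a fixed point, so a $\sigma_7$-invariant $C\in|2L|$ exists with no representation theory at all; your intermediate step (holomorphic Lefschetz for $4L_0$, pinning down the tangent weights, showing $H^0(X,4L_0)$ has no $3$-dimensional constituent) is only asserted, not carried out, and it needs a $G$-linearization of $L_0$ and an actual evaluation of the fixed-point contributions that you never perform.

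The genuine gap is in your final paragraph, which is where the entire content of the theorem lives. Excluding an invariant effective divisor $D\equiv 2L$ is exactly the hard step, and ``the numerical invariants \textup{[of $X/\langle\sigma_7\rangle$]} leave no room for the image of $D$'' is not true at the level of numerical invariants: the image $D'$ has class $\tfrac23 K_Y$ with $D'^2=4/7$, $D'K_Y=6/7$, which is numerically unobstructed on a ${\mathbb Q}$-homology projective plane with three $\tfrac17(1,5)$ points. The paper's exclusion uses the classification of the minimal resolution $\tilde Y$ as a $(2,3)$-, $(2,4)$- or $(3,3)$-elliptic surface, the nefness of $K_{\tilde Y}$ to force $K_{\tilde Y}\tilde C=0$ and hence $\tilde C$ into a union of fibre components, and then the new result (Theorems \ref{2,3}(5), \ref{2,4}(5)) that the $I_9$-fibre is non-multiple, so that every fibre component pulls back to a class $6L$, $9L$, $4L$, $3L$, etc., never $2L$. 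That multiplicity statement is itself proved by a delicate argument (solving for the intersection numbers $k_{ij}=A_{i3}B_j$, evaluating sections at the fixed points, and contradicting $h^0(X,4L)=3$, or contradicting $p_g=0$ in the multiplicity-$2$ case). Your Riemann--Hurwitz and $\delta$-invariant bookkeeping narrows the possible configurations of $D$ but never eliminates them --- indeed invariant curves through the fixed points genuinely exist in these surfaces (e.g.\ curves numerically equivalent to $6L$ coming from the $I_9$-fibre), so a purely local genus/weight count cannot finish without the global fibration input. As it stands the proof does not close at its decisive step.
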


The proof uses the elliptic structure of the quotient of $X$ by the order 7 automorphism \cite{K08}. In the course of proof we also show that the $I_9$-fibre on the minimal resolution of the order 7 quotient has multiplicity 1, which was not determined in \cite{K08}.
This additional information will be useful when one tries to give a geometric construction of such elliptic surfaces and fake projective planes. So far no fake projective plane has ever been constructed geometrically. 

\begin{theorem}\label{main2} Let $X$ be a fake projective plane with $\Aut(X)\cong C_3^2$. Let $L$ be an ample generator of $\Pic(X)$  such that $\tau^*(2L)\cong 2L$ for all $\tau\in 
Aut(X)$, $($e.g. the cubic root $L_0$ and its twist $L_0+t$ satisfy the condition for any 2-torsion bundle $t)$. Then for any $i$ we have the vanishing
$$H^i(X, 2L)=0.$$ Moreover, for any $i$ and for the cubic root $L_0$ of $K_X$
$$H^i(X, 2L_0)=H^i(X, L_0)=0.$$ 
\end{theorem}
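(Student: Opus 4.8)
\emph{Strategy.} The plan is to reduce both vanishings to the non-existence of effective divisors in the relevant linear systems, and then to obstruct such divisors by passing to the quotient of $X$ by $\Aut(X)\cong C_3^2$ — the analogue, for the order-$9$ group, of the role played by the elliptic fibration on the order-$7$ quotient in Theorem~\ref{main}.

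\emph{Step 1: Riemann--Roch and Serre duality.} Since $\chi(\calO_X)=1$, for any line bundle $M$ we have $\chi(X,M)=1+\tfrac12\,M\cdot(M-K_X)$. An ample generator $M$ satisfies $M^2=1$ and $M\cdot K_X=3$ (two ample generators meet in a point, and $K_X=3L_0$), so $\chi(X,M)=0$; twice an ample generator has $M^2=4$, $M\cdot K_X=6$, so again $\chi(X,M)=0$. Thus $\chi(X,2L)=\chi(X,L_0)=\chi(X,2L_0)=0$. By Serre duality $H^2(X,M)\cong H^0(X,K_X-M)^*$, and here $K_X-L_0\cong 2L_0$, $K_X-2L_0\cong L_0$, while for $L=L_0+t$ one has $K_X-2L\cong L_0-2t$, again an ample generator; moreover it still satisfies $\tau^*(L_0-2t)\cong L_0-2t$ for all $\tau$, since $\tau^*(2t)\cong 2t$ is forced by the hypothesis $\tau^*(2L)\cong 2L$ together with $\tau^*L_0\cong L_0$. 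Hence it suffices to prove $H^0(X,M)=0$ whenever $M$ is an ample generator with $\tau^*M\cong M$ for all $\tau$, together with $H^0(X,2L)=H^0(X,2L_0)=0$; the vanishing of $H^1$ then follows from $\chi(X,M)=0$.

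\emph{Step 2: producing an invariant curve.} Suppose $H^0(X,M)\ne0$ for such an $M$. The hypothesis makes the class $[M]$ invariant, so $G:=\Aut(X)\cong C_3^2$ acts on $|M|$; being finite abelian, $G$ fixes a point of $|M|$, i.e.\ there is a $G$-invariant effective divisor $D\in|M|$. If $M$ is an ample generator, $D$ is a reduced irreducible curve: it cannot contain a component $C$ with $C\cdot K_X\in\{1,2\}$ nor be non-reduced, since $\Pic(X)/\mathrm{tors}\cong\bbZ$ is generated by a class of $K_X$-degree $3$. If $M=2L_0$ or $2L$, then either $D$ is reduced irreducible with $D\cdot K_X=6$, or $D$ is supported on curves of ample-generator class, in which case the ample-generator case applies. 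So it remains to rule out a $G$-invariant reduced irreducible curve $D$ with $D^2\in\{1,4\}$ and $D\cdot K_X\in\{3,6\}$, of arithmetic genus $3$ or $6$.

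\emph{Step 3: descent to the quotient, and the main obstacle.} Let $\pi\colon X\to Y:=X/G$, a degree-$9$ map; $Y$ is a normal projective surface with only cyclic quotient singularities, at the images of the fixed loci of the eight nontrivial elements of $G$. The holomorphic Lefschetz fixed point formula — using $\chi(\calO_X)=1$ and that $G$ acts trivially on $H^0(\calO_X)$ and on $H^2(X,\bbC)=\bbC\cdot[K_X]$ — forces every order-$3$ element to have exactly three isolated fixed points, all of type $\tfrac13(1,2)$ (an $A_2$ point), so the singularities of $Y$ and the local $G$-weights are determined by the explicit $\Aut(X)$-action on the three relevant $X$ from the Cartwright--Steger data. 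Since $Y$ is log terminal and $H^0(X,K_X)^G=H^1(X,\calO_X)=0$, the minimal resolution $\widetilde Y$ has $p_g=q=0$. Now $D$ is $G$-invariant and $\pi$ is \'etale in codimension $1$, so $\pi^*\bar D=D$ with $\bar D=\pi(D)$, hence $\bar D^2=D^2/9\in\{1/9,4/9\}$; tracking $\bar D$ through $\widetilde Y$, this fractional self-intersection severely restricts how $\bar D$ meets the singularities of $Y$. Simultaneously $\pi|_D\colon D\to\bar D$ realises $D$ as the quotient of a genus $3$ (resp.\ $6$) curve by $G$, ramified only over images of fixed points, and Riemann--Hurwitz $2p_a(D)-2=9\bigl(2g(\bar D)-2\bigr)+R$, with $R$ built from contributions $8$ (a point fixed by all of $G$) and $2$ (stabiliser of order $3$), leaves only a few numerical solutions. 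The hard part is to show that each of these conflicts with the fixed-point data established above — it would require more fully $G$-fixed points on $D$, or a different value of $\bar D^2$, than the actual geometry of the three fake projective planes with $\Aut\cong C_3^2$ permits. That contradiction gives $H^0(X,M)=0$ and, with Step~1, both asserted vanishings (hence also the exceptional sequence $\langle\calO_X,L_0,2L_0\rangle$).
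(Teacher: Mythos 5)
Your Step 3 never closes. You reduce the hypothetical invariant curve to ``a few numerical solutions'' of Riemann--Hurwitz together with $\bar D^2\in\{1/9,4/9\}$, and then appeal to unspecified ``fixed-point data'' and to ``the actual geometry of the three fake projective planes'' to conclude a contradiction --- but that is precisely the part that has to be proved, and nothing in your proposal supplies it. The paper's mechanism is different and does complete: it works with the quotient by a \emph{single} order-$3$ automorphism $\sigma$ only. Writing $C=\pi^*C'$ on $Y=X/\langle\sigma\rangle$ (three singular points of type $\tfrac13(1,2)$), one computes the possible local corrections $\mathcal{C}_k$ at the three points and uses integrality of $\tilde C^2$ on the minimal resolution, together with $\sigma'^*(C)\cdot C=4$, to show that any $\sigma$-invariant $C\in|2L|$ passes through \emph{exactly two} of the three fixed points $x_1,x_2,x_3$ of $\sigma$. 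Since $\sigma'$ rotates $x_1,x_2,x_3$, the curves $C,\ \sigma'^*C,\ \sigma'^{2*}C$ have three distinct two-point incidence patterns, so the corresponding sections $g_1,g_2,g_3\in H^0(X,2L)$ produce four sections $g_1^2,g_2^2,g_1g_2,g_3^2\in H^0(X,4L)$ that are linearly independent (evaluate at the $x_j$), contradicting $h^0(X,4L)=3$ from Riemann--Roch and Kodaira vanishing (Lemma \ref{4L}). Your proposal contains no analogue of this counting step, which is the heart of the proof of both Theorem \ref{main} and Theorem \ref{main2}.

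There is also a genuine error earlier: in Step 2 you claim that since $G\cong C_3^2$ is finite abelian it must fix a point of $|M|$. The action of $G$ on $\mathbf{P}H^0(X,M)$ is only a projective representation; for a non-cyclic abelian group it need not linearize ($H^2(C_3^2,\mathbb{C}^*)\cong C_3$), and the Heisenberg action of $C_3^2$ on $\mathbb{P}^2$ has no fixed point. So you are not entitled to a $G$-invariant divisor, and the entire descent to the degree-$9$ quotient in Step 3 rests on that. The paper avoids this by taking only a $\langle\sigma\rangle$-invariant member of $|2L|$ (a single finite-order automorphism of a projective space always has a fixed point) and using the second generator $\sigma'$ merely to translate that curve. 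A further soft spot: your reduction of the $M=2L$ case to ``the ample-generator case'' when $D$ splits into two generator-class components is unjustified, since those components need not have $\Aut$-invariant classes; the paper never needs irreducibility of $C$. Your Step 1 reductions (Riemann--Roch, Serre duality, invariance of $K_X-2L$) are fine and essentially match Lemmas \ref{H0}--\ref{H02}.
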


The proof uses the structure of the quotient of $X$ by an order 3 automorphism \cite{K08}.

In both theorems, the core part is the vanishing $H^0(X, 2L)=0$. The key idea of proof is that if $H^0(X, 2L)\neq 0$, then $\dim H^0(X, 4L)\ge 4$. On the other hand,  the Rieman-Roch yields $\dim H^0(X, 4L)=3$.

\begin{corollary}\label{cor} Let $X$ be a fake projective plane with $\Aut(X)\cong 7:3$ or $C_3^2$. Let $L_0$ be the unique cubic root of $K_X$. Then the three line bundles
$${\mathcal O}_X,\,\,-L_0,\,\,-2L_0$$ form an exceptional sequence on $X$.
\end{corollary}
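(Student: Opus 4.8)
The plan is to deduce the corollary directly from Theorems~\ref{main} and~\ref{main2} together with Serre duality and Kodaira vanishing on the fake projective plane $X$. Recall that an exceptional sequence of line bundles $(E_0, E_1, E_2)$ on a smooth projective surface requires, for each $j$, that $\Hom(E_j, E_j) = \bbC$ and $\Ext^k(E_j,E_j)=0$ for $k>0$ (automatic for line bundles on a surface with $H^1(\calO_X)=H^2(\calO_X)=0$, which holds here since $p_g=q=0$), and, for $i>j$, that $\Ext^k(E_i, E_j)=0$ for all $k$; that is, $H^k(X, E_j - E_i)=0$ for all $k$. So with $E_0=\calO_X$, $E_1=-L_0$, $E_2=-2L_0$, the three required vanishings are
\begin{align*}
H^k(X, E_0 - E_1) &= H^k(X, L_0) = 0 \quad \text{for all } k,\\
H^k(X, E_0 - E_2) &= H^k(X, 2L_0) = 0 \quad \text{for all } k,\\
H^k(X, E_1 - E_2) &= H^k(X, L_0) = 0 \quad \text{for all } k.
\end{align*}

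First I would dispatch the diagonal (semiorthogonality with oneself) condition: for any line bundle $E$ on $X$ we have $\Ext^k(E,E) = H^k(X,\calO_X)$, which is $\bbC$ for $k=0$ and $0$ for $k=1,2$ because a fake projective plane has $q = h^1(\calO_X) = 0$ and $p_g = h^2(\calO_X)=0$. This handles the "exceptional object" part for each of the three bundles with no further work. Next, the off-diagonal conditions reduce, as displayed above, to the two statements $H^k(X,L_0)=0$ for all $k$ and $H^k(X,2L_0)=0$ for all $k$. When $\Aut(X)\cong 7:3$ these are exactly the "moreover" clauses of Theorem~\ref{main}; when $\Aut(X)\cong C_3^2$ they are exactly the "moreover" clauses of Theorem~\ref{main2} (applied to the cubic root $L_0$, which by hypothesis there satisfies $\tau^*(2L_0)\cong 2L_0$). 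So the corollary follows by simply invoking the appropriate theorem according to the isomorphism type of $\Aut(X)$.

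One small bookkeeping point worth spelling out: the definition of exceptional sequence only constrains $\Hom$ and $\Ext$ "to the right," i.e.\ $\Ext^k(E_i,E_j)$ for $i>j$; there is no condition on $\Ext^k(E_j,E_i)$ for $i>j$. Hence we never need vanishing of $H^k(X,-L_0)$ or $H^k(X,-2L_0)$ — which is good, since those do not vanish: by Serre duality $H^2(X,-2L_0) = H^0(X, K_X + 2L_0)^\vee = H^0(X, 5L_0)^\vee \neq 0$. It is precisely the asymmetry of the exceptional-sequence axioms, combined with the fact that $-L_0$ and $-2L_0$ are "more negative" than $\calO_X$ so the relevant differences $E_j - E_i$ are the \emph{positive} multiples $L_0, 2L_0$, that makes the construction work. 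I do not anticipate a genuine obstacle here: given the two theorems, the corollary is essentially a formal consequence, and the only thing to be careful about is matching each hypothesis to the right automorphism group and citing the standard vanishing $H^{>0}(X,\calO_X)=0$ for fake projective planes.
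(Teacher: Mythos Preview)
Your proposal is correct and follows essentially the same route as the paper: both reduce the exceptional-sequence condition for $(\mathcal{O}_X,-L_0,-2L_0)$ to the vanishings $H^i(X,L_0)=H^i(X,2L_0)=0$ for all $i$, and then invoke Theorems~\ref{main} and~\ref{main2}. The paper is terser (and also notes in passing that, via Lemma~\ref{H02}, these vanishings collapse to the single condition $H^0(X,2L_0)=0$), but the argument is the same.
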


This is equivalent to that $H^i(X, 2L_0)=H^i(X, L_0)=0$ for all $i$,  hence follows from Theorem \ref{main} and \ref{main2}. (Since $L_0$ is a cubic root of $K_X$, the latter vanishings are equivalent to the single vanishing $H^0(X, 2L_0)=0$.) This confirms, for fake projective planes with enough automorphisms, the conjecture raised by Galkin, Katzarkov, Mellit and Shinder \cite{GKMS} that predicts the existence of an exceptional sequence of length 3 on every fake projective plane. Disjoint from our cases, N. Fakhruddin \cite{F} recently has confirmed the conjecture for the case of three 2-adically uniformised fake projective planes.

For an ample line bundle $M$ on a fake projective plane $X$, $M^2$ is a square integer. When $M^2\ge 9$, $H^0(X, M)\neq 0$ if and only if $M\ncong K_X$. This follows from the Riemann-Roch and the Kodaira vanishing theorem. When $M^2\leq 4$, $H^0(X, M)$ may not vanish, though no example of non-vanishing has been known. If it does not vanish, then it gives an effective curve of small degree.
The non-vanishing of $H^0(X, M)$ is equivalent to the  existence of certain automorphic form on the 2-ball.

\bigskip
{\bf Notation}
\begin{itemize}
\item $K_Y$: the canonical class of $Y$ \item $b_i(Y):$ the $i$-th
Betti number of $Y$ \item $e(Y):$ the topological Euler number of
$Y$  \item $q(X):=\dim H^1(X,\mathcal{O}_{X})$, the irregularity
of a surface $X$ \item $p_g(X):=\dim H^2(X,\mathcal{O}_{X})$, the
geometric genus of a surface $X$ \item curves of type $[n_1, n_2,
\ldots, n_l]$ : a string of smooth rational curves of self
intersection $-n_1, -n_2, \ldots, -n_l$
\item $\sim$:  numerical equivalence between divisors on a (singular) variety
\item $[D]$:  the linear  equivalence class of a divisor $D$
\item $[L]$ : the isomorphism class of a line bundle $L$
\end{itemize}

\section{Preliminaries}

\begin{lemma}\label{H0} Let $X$ be a fake projective plane. Assume that 
$H^0(X, 2L)=0$ for any ample generator $L\in \Pic(X)$. Then
$$H^1(X, 2L)=H^2(X, 2L)=0$$  for any ample generator $L$.
\end{lemma}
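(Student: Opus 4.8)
The strategy is to exploit Riemann–Roch, Serre duality, and the Kodaira vanishing theorem on the fake projective plane $X$, together with the hypothesis $H^0(X,2L)=0$ for every ample generator $L$. First I would record the numerical data: since $X$ is a fake projective plane, $\chi(\calO_X)=1$, $p_g(X)=q(X)=0$, $K_X^2=9$, and $K_X\sim 3L$ numerically for an ample generator $L$ (so $K_X-2L\sim L$ is ample and $2L-K_X\sim -L$ is anti-ample). Riemann–Roch for a line bundle $D$ on $X$ gives $\chi(D)=\chi(\calO_X)+\tfrac12 D\cdot(D-K_X)$; applying this to $D=2L$, and using $L^2=1$, $L\cdot K_X=3$, yields $\chi(2L)=1+\tfrac12(4\cdot 1-2\cdot 3)=1-1=0$. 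So $h^0(2L)-h^1(2L)+h^2(2L)=0$.

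Next I would kill $H^2$ by Serre duality followed by Kodaira vanishing. We have $H^2(X,2L)\cong H^0(X,K_X-2L)^\vee$, and $K_X-2L$ is numerically equivalent to $L$, hence is an ample line bundle (indeed an ample generator of $\Pic(X)$ modulo torsion). In fact, writing $L':=K_X-2L$, one checks $2L'=2K_X-4L\sim 2L$ numerically — more precisely $2L' - 2L = 2K_X - 6L = 2(K_X-3L)$, which is a torsion class, so $2L'$ differs from an ample generator doubled only by torsion; the cleanest route is to note directly that $L'$ is itself an ample generator (as $L'^2 = (K_X-2L)^2 = 9 - 12 + 4 = 1$), so $H^2(X,2L')=0$ would follow from the hypothesis applied to $L'$ — but that is not what we need. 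What we actually need is $H^0(X,K_X-2L)=H^0(X,L')$. Here I would instead invoke Kodaira vanishing in the form $H^2(X,2L)=H^0(X,K_X-2L)^\vee$ and observe that $K_X-2L$ is ample with $(K_X-2L)^2=1$, so its sections, if any, would give an effective divisor $C$ with $C\sim L'$, hence $C\cdot L'=1$ and $C$ irreducible; then $2C\sim 2L'$ and one runs the same degree count as in the main theorems. Alternatively, and more simply: by Serre duality $H^2(X,2L)^\vee\cong H^0(X,K_X-2L)$, and applying the hypothesis is not circular if we note $K_X - 2L$ is an ample generator $L'$ with $K_X - 2L' = K_X - 2(K_X-2L) = 4L - K_X \sim L$, i.e. $2L = 2(K_X - L')$; hmm. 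The robust statement to use is: $H^2(X,2L)\cong H^0(X,K_X-2L)^\vee$ and $H^0(X,K_X-2L)=0$ because $K_X-2L$ is, up to torsion, the ample generator $-(2L-K_X)$ and... The point I want: since $K_X \sim 3L$, $K_X-2L \sim L$; we may choose the ample generator to be exactly $K_X - 2L =: M$, and then $2M = 2K_X-4L$ while our vanishing hypothesis is about $H^0(X,2(\text{ample gen}))$. Since $2M - 2L = 2(K_X-3L)$ is torsion, $2M$ and $2L$ are translates of each other by a $2$-torsion (actually by a torsion) line bundle. This is the subtle point and the \textbf{main obstacle}: the hypothesis is stated for \emph{every} ample generator, i.e. for $2L$ as $L$ ranges over all ample generators, equivalently for $2L+t$ over all torsion $t$ with $t$ in the image of doubling; so $H^0(X,K_X-2L)$ is covered provided $K_X-2L$ is again an ample generator — which it is, as computed, $(K_X-2L)^2=1$. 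Hence $H^0(X,K_X-2L)=0$ by hypothesis, giving $H^2(X,2L)=0$.

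With $\chi(2L)=0$ and $h^0(2L)=h^2(2L)=0$ already established, the Riemann–Roch identity forces $h^1(2L)=0$ as well, completing the proof. I would present it in exactly this order: (1) Riemann–Roch to get $\chi(2L)=0$; (2) Serre duality plus the observation that $K_X-2L$ is again an ample generator (self-intersection $1$), so the hypothesis gives $h^0(K_X-2L)=0$, hence $h^2(2L)=0$; (3) conclude $h^1(2L)=0=h^2(2L)$. The only step requiring care, as noted, is confirming that $K_X-2L$ genuinely falls under the scope of the hypothesis — that it is an ample generator — which is the arithmetic $(K_X-2L)^2 = K_X^2 - 4K_X\cdot L + 4L^2 = 9-12+4=1$ together with ampleness of $K_X-2L\sim L$; everything else is a two-line computation.
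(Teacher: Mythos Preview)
Your overall architecture (Riemann--Roch gives $\chi(2L)=0$; Serre duality reduces $h^2(2L)$ to $h^0(K_X-2L)$; then conclude $h^1=0$) matches the paper exactly. But your step (2) as stated has a genuine gap.

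You write: ``the hypothesis gives $h^0(K_X-2L)=0$'' because $K_X-2L$ is an ample generator. That is not what the hypothesis says. The hypothesis asserts $H^0(X,2L')=0$ for every ample generator $L'$; setting $L'=M:=K_X-2L$ yields $H^0(X,2M)=0$, not $H^0(X,M)=0$. The line bundle $K_X-2L$ has self-intersection $1$, so it can never be of the form $2L'$ (which has self-intersection $4$), and hence is \emph{not} literally in the scope of the hypothesis.

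The missing ingredient is the one-line implication
\[
H^0(X,2M)=0 \ \Longrightarrow\ H^0(X,M)=0,
\]
obtained by squaring a hypothetical nonzero section of $M$. You actually brushed against this midway through (``its sections, if any, would give an effective divisor $C$ \ldots\ then $2C\sim 2L'$''), which is exactly the right move, but you then dropped it and reverted to the incorrect claim that the hypothesis applies directly. The paper's proof makes precisely this observation as its very first sentence: ``The vanishing $H^0(X,2L)=0$ implies $H^0(X,L)=0$,'' and only then applies it to the ample generator $K-2L$. Insert that half-line and your argument is complete; without it, step (2) does not go through.
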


\begin{proof} The vanishing $H^0(X, 2L)=0$ implies $H^0(X, L)=0$.  Since $K-2L$ is an ample generator,
$$H^2(X, 2L)=H^0(X, K-2L)=0.$$
Finally by the Riemann-Roch, $$H^1(X, 2L)=0.$$
\end{proof}

\begin{lemma}\label{H02} Let $X$ be a fake projective plane such that $K_X$ is divisible by 3. Let $L_0$ be an ample generator such that $K_X\cong 3L_0$.   Assume that 
$H^0(X, 2L_0)=0$. Then for all $i$
$$H^i(X, 2L_0)=H^i(X, L_0)=0.$$  
\end{lemma}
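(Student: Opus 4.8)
The plan is to derive the full vanishing purely formally from the single hypothesis $H^0(X,2L_0)=0$, using Serre duality and Riemann--Roch, much as in the proof of Lemma~\ref{H0}. One extra observation is needed at the outset: $H^0(X,2L_0)=0$ already forces $H^0(X,L_0)=0$, since a nonzero section $s$ of $L_0$ would give the nonzero section $s\otimes s$ of $2L_0$ (equivalently, an effective $L_0$ produces an effective $2L_0$). The second ingredient is the numerical reciprocity coming from $K_X\cong 3L_0$, namely $K_X-2L_0\cong L_0$ and $K_X-L_0\cong 2L_0$; this exchanges the two bundles $L_0$ and $2L_0$ under Serre duality and ties both of their top cohomology groups to the one assumed vanishing.

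With these in hand, the argument runs as follows. By Serre duality,
$$H^2(X,2L_0)\cong H^0(X,L_0)^{\vee}=0,\qquad H^2(X,L_0)\cong H^0(X,2L_0)^{\vee}=0,$$
the vanishing on the left from the preliminary observation and the one on the right from the hypothesis. For a fake projective plane $\chi(\mathcal{O}_X)=1$, and $L_0^2=1$ since $L_0$ is an ample generator, so Riemann--Roch gives
\begin{align*}
\chi(X,2L_0)&=1+\tfrac12(2L_0)\cdot(2L_0-K_X)=1-L_0^2=0,\\
\chi(X,L_0)&=1+\tfrac12 L_0\cdot(L_0-K_X)=1-L_0^2=0.
\end{align*}
Since $H^0$ and $H^2$ vanish in both cases, the Euler characteristic being zero forces $H^1(X,2L_0)=H^1(X,L_0)=0$, which is the assertion.

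I do not expect any real obstacle in this step: it is a formal consequence of duality and Riemann--Roch, the only mildly clever point being to notice the reciprocity $K_X-2L_0\cong L_0$ that reduces all four remaining vanishings to $H^0(X,2L_0)=0$. The genuine work lies in establishing that single vanishing, which is where the automorphism group and the elliptic fibration structure of the relevant quotient surface enter.
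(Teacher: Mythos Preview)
Your proof is correct and follows essentially the same route as the paper's: the key point is the reciprocity $K_X-2L_0\cong L_0$ (and $K_X-L_0\cong 2L_0$), which via Serre duality reduces all the $H^2$ vanishings to the assumed $H^0(X,2L_0)=0$ together with its consequence $H^0(X,L_0)=0$, after which Riemann--Roch kills $H^1$. The paper states this more tersely, but the logic is identical.
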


\begin{proof} In this case  $K-2L_0=L_0$, thus $H^2(X, 2L_0)=H^0(X, L_0)=0.$
Then the vanishing of $H^1$ follows from the Riemann-Roch.
\end{proof}

\begin{lemma}\label{4L} Let $X$ be a fake projective plane.  Then
for any ample generator $L\in \Pic(X)$ and any torsion $t\in \Pic(X)$
$$h^0(X, 4L+t)=3.$$ 
\end{lemma}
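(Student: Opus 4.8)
\section*{Proof proposal for Lemma \ref{4L}}

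The plan is to compute $\chi(X,4L+t)$ by Riemann--Roch and then kill the two higher cohomology groups. First I would collect the numerical data of a fake projective plane $X$: since $X$ has the Betti numbers of $\bbP^2_\bbC$ and $p_g(X)=0$, we have $q(X)=0$ and $\chi(\mathcal{O}_X)=1-q(X)+p_g(X)=1$; the group $\Pic(X)$ modulo torsion is infinite cyclic generated by $[L]$ with $L^2=1$; and $K_X$ is numerically equivalent to $3L$ (it is numerically $nL$ for some integer $n$, and $K_X^2=9$ forces $n=\pm 3$, while ampleness of $K_X$ forces $n=3$). Torsion classes have zero intersection with every divisor, so $(4L+t)^2=16$ and $(4L+t)\cdot K_X=12$, whence Riemann--Roch gives
$$\chi(X,4L+t)=1+\tfrac12\bigl(16-12\bigr)=3.$$

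Next I would show $H^2(X,4L+t)=0$. By Serre duality this group is $H^0(X,K_X-4L-t)$, and $K_X-4L-t$ is numerically equivalent to $-L$. A nonzero section would exhibit $K_X-4L-t$ as (numerically equivalent to) an effective divisor, which must meet the ample class $L$ non-negatively; but $(-L)\cdot L=-1<0$, a contradiction. Hence $H^2(X,4L+t)=0$.

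Then I would show $H^1(X,4L+t)=0$ via the Kodaira vanishing theorem. Set $M:=4L+t-K_X$, a line bundle numerically equivalent to $L$. On a smooth projective surface ampleness is a numerical condition by the Nakai--Moishezon criterion ($M^2>0$ and $M\cdot C>0$ for every irreducible curve $C$, both of which are inherited from $L$), so $M$ is ample. Since $X$ is a smooth projective surface over $\bbC$, Kodaira vanishing gives $H^1(X,4L+t)=H^1(X,K_X+M)=0$. Combining the three computations, $h^0(X,4L+t)=\chi(X,4L+t)-h^1(X,4L+t)+h^2(X,4L+t)=3$.

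I do not expect a real obstacle here: the argument is a routine Riemann--Roch plus Serre duality plus Kodaira vanishing. The only points needing a little care are that tensoring by a torsion line bundle alters neither intersection numbers nor ampleness, and the passage from ``numerically equivalent to an ample bundle'' to ``ample'', which is precisely where the Nakai--Moishezon criterion on surfaces is used.
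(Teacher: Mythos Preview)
Your proof is correct and follows essentially the same route as the paper: establish that $4L+t-K_X$ is ample, apply Kodaira vanishing, and read off $h^0$ from Riemann--Roch. The only difference is cosmetic: the paper obtains both $H^1$ and $H^2$ vanishing in one stroke from Kodaira (since $H^i(X,K_X+M)=0$ for all $i>0$ when $M$ is ample), whereas you treat $H^2$ separately via Serre duality and an effectivity argument---this is correct but redundant once you have Kodaira.
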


\begin{proof} Since $4L+t-K_X$ is ample, Kodaira's vanishing theorem gives
$$H^1(X, 4L+t)=H^2(X, 4L+t)=0.$$
Thus the claim follows from the Riemann-Roch.
\end{proof}

\begin{lemma}\label{2L} Let $X$ be a fake projective plane.  Assume that $X$ has 2-torsions only. Then for any ample generator $L$ and for any automorphism  $\sigma\in \Aut(X)$
$$\sigma^*(2L)=2L.$$ 
\end{lemma}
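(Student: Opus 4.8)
The plan is to use that on a fake projective plane $X$ one has $q(X)=0$ and $b_2(X)=1$, so $\Pic(X)=\NS(X)$ is a group of the shape $\mathbb{Z}\oplus T$, where $T:=\mathrm{Tor}(\Pic(X))=H_1(X,\mathbb{Z})$ and, by hypothesis, every element of $T$ is annihilated by $2$. Since $L$ is an ample generator, its class spans the free part $\Pic(X)/T\cong\mathbb{Z}$, and the whole argument reduces to locating $\sigma^*L$ inside this rank-one group.

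First I would observe that for any $\sigma\in\Aut(X)$ the pullback $\sigma^*L$ is again ample (automorphisms preserve ampleness) and satisfies $(\sigma^*L)^2=L^2=1$, so $\sigma^*L$ is itself an ample generator. As recalled in the introduction, any two ample generators differ by a torsion element; hence $\sigma^*L=L+t$ for some $t\in T$. Equivalently, $\sigma^*$ acts on $\NS(X)\cong\mathbb{Z}$ preserving both the intersection form and the ample cone, so it acts as the identity on $\NS(X)$, giving $\sigma^*L\equiv L$ numerically, i.e. $\sigma^*L-L\in T$. The role of ampleness here is precisely to exclude $\sigma^*$ acting as $-1$ on the rank-one lattice $\NS(X)$: that would force $\sigma^*L\cdot L=-L^2<0$, contradicting that $\sigma^*L$ is ample (hence pairs positively with the ample class $L$).

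It then remains to tensor: $\sigma^*(2L)=2\sigma^*L=2L+2t$, and $2t=0$ because $t$ is $2$-torsion, so $\sigma^*(2L)=2L$, as claimed.

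I do not expect any serious obstacle: the only point requiring care is ruling out that $\sigma^*$ acts by $-1$ on $\NS(X)$, which is handled by the positivity of $\sigma^*L\cdot L$, and the hypothesis ``$2$-torsions only'' is invoked exactly once, to kill $2t$.
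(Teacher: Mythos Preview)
Your proof is correct and follows essentially the same approach as the paper: $\sigma^*L$ is ample with $(\sigma^*L)^2=1$, hence an ample generator, so $\sigma^*L=L+t$ for some torsion $t$, and then $2t=0$ by the $2$-torsion hypothesis. The extra discussion ruling out that $\sigma^*$ acts as $-1$ on $\NS(X)$ is a reasonable elaboration, though the paper simply absorbs this into the observation that two ample generators differ by a torsion.
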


\begin{proof} Since $L$ is ample, so is $\sigma^*(L)$. Since
$$(\sigma^*(L))^2=L^2=1,$$
$\sigma^*(L)$ is an ample generator. Thus $\sigma^*(L)-L$ is a torsion, hence
$$\sigma^*(L)= L+s$$ for some 2-torsion $s\in \Pic(X)$. Finally $2(L+s)=2L$. 
\end{proof}

\begin{lemma}\label{L0} Let $X$ be a fake projective plane. 
\begin{enumerate}
\item If $H_1(X, \mathbb{Z})$ does not contain a 3-torsion, then the canonical class $K_X$ is divisible by 3 and there is a unique cubic root of $K_X$.
\item If there is a line bundle  $L_0$ such that $K_X\cong 3L_0$, then for any automorphism  $\sigma\in \Aut(X)$
$$\sigma^*(L_0)=L_0\quad ({\rm modulo\,\,a}\,\,3{\rm -torsion}).$$ 
\end{enumerate}
\end{lemma}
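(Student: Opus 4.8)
The plan is to prove both parts using the exponential sequence and the action of $\Aut(X)$ on cohomology, exploiting that a fake projective plane has $H^1(X,\mathcal{O}_X)=H^2(X,\mathcal{O}_X)=0$. First I would establish part (1). Since $X$ is a fake projective plane, $b_1(X)=0$ and $b_2(X)=1$, so $H^2(X,\mathbb{Z})\cong \mathbb{Z}\oplus T$ where $T=\mathrm{Tor}(H^2(X,\mathbb{Z}))=H_1(X,\mathbb{Z})$ by the universal coefficient theorem. The exponential sequence, together with $H^1(X,\mathcal{O}_X)=H^2(X,\mathcal{O}_X)=0$, gives $\mathrm{Pic}(X)\cong H^2(X,\mathbb{Z})\cong \mathbb{Z}\oplus T$; pick an ample generator $L$ so that $[L]$ maps to a generator of the free part. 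In $\mathrm{Pic}(X)/T\cong\mathbb{Z}$ we have $K_X\equiv 3L$ because $K_X^2=9$ and $L^2=1$ force $K_X=\pm 3L$ modulo torsion, and ampleness of both $K_X$ and $L$ rules out the minus sign. Hence $K_X=3L+t$ for some $t\in T$. Now the hypothesis that $T$ has no $3$-torsion means multiplication by $3$ is an automorphism of the $3$-primary... more precisely, since $T$ is finite with no element of order $3$, the map $T\xrightarrow{\cdot 3}T$ is a bijection, so there is a unique $s\in T$ with $3s=t$; then $L_0:=L+s$ satisfies $3L_0=K_X$, and uniqueness of the cubic root follows because any two cubic roots differ by a $3$-torsion element of $T$, of which there are none.

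For part (2), suppose $K_X\cong 3L_0$ and let $\sigma\in\Aut(X)$. Since $\sigma$ is an automorphism it fixes the canonical class: $\sigma^*K_X\cong K_X$, so $3\sigma^*L_0\cong 3L_0$, i.e. $3(\sigma^*L_0-L_0)=0$ in $\mathrm{Pic}(X)$. Thus $\sigma^*L_0-L_0$ is a $3$-torsion line bundle, which is exactly the assertion $\sigma^*(L_0)=L_0$ modulo a $3$-torsion. (When $X$ has no $3$-torsion this forces $\sigma^*L_0=L_0$ on the nose, recovering the remark in the introduction that $[L_0]$ is $\Aut(X)$-invariant.)

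I do not expect a serious obstacle here; the only point requiring a little care is the identification $\mathrm{Pic}(X)\cong H^2(X,\mathbb{Z})$ and the claim that $K_X=+3L$ rather than $-3L$ modulo torsion — the former follows from the exponential exact sequence once $h^1(\mathcal{O}_X)=h^2(\mathcal{O}_X)=0$ is invoked (so that $\mathrm{Pic}(X)\to H^2(X,\mathbb{Z})$ is an isomorphism), and the latter from the fact that an ample class has positive degree against any ample curve while $-3L$ would be anti-ample. Everything else is the elementary observation that a finite abelian group with no $3$-torsion admits unique division by $3$.
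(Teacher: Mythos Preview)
Your argument is correct and follows essentially the same route as the paper's proof: both use the exponential sequence with $p_g=q=0$ to identify $\Pic(X)\cong H^2(X,\mathbb{Z})$, write $K_X=3L+t$ for an ample generator $L$ and a torsion element $t$, divide $t$ by $3$ using the absence of $3$-torsion, and deduce (2) from $\sigma^*K_X\cong K_X$. Your extra care about the sign in $K_X\equiv +3L$ and the explicit bijectivity of multiplication by $3$ on $T$ are harmless elaborations of steps the paper leaves implicit.
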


\begin{proof} (1) Since $p_g(X)=q(X)=0$, the long exact sequence induced by the exponential sequence gives $\Pic(X)=H^2(X, \mathbb{Z})$. By the universal coefficient theorem, Tor$(H^2(X, \mathbb{Z}))= {\rm Tor}(H_1(X, \mathbb{Z}))$. So $X$ does not admit a 3-torsion line bundle. Let $L$ be an ample line bundle with $L^2=1$. Then $$K_X=3L+t$$ for some torsion line bundle $t$. Since the order of $t$ is coprime to 3, one can write $t=3t'$. This proves that $K_X$ is divisible by 3. The second conclusion follows from that two cubic roots of $K_X$ differ by a 3-torsion.

(2) Write
$\sigma^*(L_0)= L_0+s$ for some torsion $s\in \Pic(X)$. 
Since $\sigma^*$ preserves $K_X=3L_0$, we see that 
$3s=0.$ 
\end{proof}

\begin{remark}
(1) By a result of Koll\'ar (\cite{Ko}, p. 96) the 3-divisibility of  $K_X$ is  equivalent to the liftability of the fundamental group to SU$(2,1)$. Except 4 pairs of fake projective planes the fundamental groups lift to SU$(2,1)$ (\cite{PY} Section 10.4, \cite{CS},  \cite{CS2}). In the notation of \cite{CS}, these exceptional 4 pairs are the 3 pairs in the class $(\mathcal{C}_{18}, p=3, \{2\})$, whose automorphism groups are of order 3, and the one in the class $(\mathcal{C}_{18}, p=3, \{2I\})$, whose automorphism group is trivial.

(2) There are fake projective planes with a 3-torsion and with canonical class divisible by 3 \cite{CS2}.

\end{remark}

\subsection{Quotients of fake projective planes}

Let $X$ be a fake projective plane with a
non-trivial group $G$ acting on it. In \cite{K08}, all
possible structures of the quotient surface $X/G$ and its minimal
resolution were classified:

\bigskip
\begin{enumerate}
\item If $G=C_3$, then $X/G$ is a ${\mathbb Q}$-homology
projective plane with $3$ singular points of type
$\dfrac{1}{3}(1,2)$ and its minimal resolution is a minimal
surface of general type with $p_g=0$ and $K^2=3$. \item If
$G=C_3^2$, then $X/G$ is a ${\mathbb Q}$-homology projective plane
with $4$ singular points of type $\dfrac{1}{3}(1,2)$ and its
minimal resolution is a minimal surface of general type with
$p_g=0$ and $K^2=1$. \item If $G=C_7$, then $X/G$ is a ${\mathbb
Q}$-homology projective plane with $3$ singular points of type
$\dfrac{1}{7}(1,5)$ and its minimal resolution is a $(2,3)$-,
$(2,4)$-, or $(3,3)$-elliptic surface. \item If $G=7:3$, then
$X/G$ is a ${\mathbb Q}$-homology projective plane with $4$
singular points, where three of them are of type
$\dfrac{1}{3}(1,2)$ and one of them is of type
$\dfrac{1}{7}(1,5)$, and its minimal resolution is a $(2,3)$-,
$(2,4)$-, or $(3,3)$-elliptic surface.
\end{enumerate}

\bigskip
Here a ${\mathbb Q}$-homology projective plane is a normal
projective surface with the same Betti numbers as $\bbP_{\bbC}^2$ (cf.
\cite{HK1}, \cite{HK2}). A fake projective plane is a nonsingular
${\mathbb Q}$-homology projective plane, hence every quotient is
again a ${\mathbb Q}$-homology projective plane. An
$(a,b)$-elliptic surface is a relatively minimal elliptic surface
over $\bbP^1$ with $c_2=12$ having two multiple fibres of
multiplicity $a$ and $b$ respectively. It has Kodaira dimension 1
if and only if $a\ge 2, b\ge 2, a+b\ge 5$. It is an Enriques
surface iff $a=b=2$, and it is rational iff $a=1$ or $b=1$. All
$(a,b)$-elliptic surfaces have $p_g=q=0$, and by \cite{D} its
fundamental group is the cyclic group $C_d$, where $d$ is the
greatest common divisor of $a$ and $b$. A $(2,3)$-elliptic surface is called a Dolgachev surface.

\subsection{Normal surfaces with quotient singularities}

 Let $S$ be a normal projective surface with quotient singularities and $$f : S'
\rightarrow S$$ be a minimal resolution of $S$. The free part of the second cohomology group of $S'$,
$$H^2(S', \mathbb{Z})_{free} := H^2(S', \mathbb{Z})/(torsion)$$ becomes a unimodular lattice. For a quotient singular
point $p\in S$, let $$\mathcal R_p\subset H^2(S',
\mathbb{Z})_{free}$$ be the sublattice  spanned by the numerical
classes of the components of $f^{-1}(p)$. It is negative
definite, and its discriminant group
$${\rm disc}(\mathcal R_p):={\rm Hom}(\mathcal R_p, \mathbb{Z})/\mathcal R_p$$ is isomorphic to the
abelianization $G_p/[G_p, G_p]$ of the local fundamental group
$G_p$. In particular, the absolute value $|\det(\mathcal R_p)|$ of
the determinant
 of the intersection matrix of $\mathcal R_p$ is equal to the order
$|G_p/[G_p, G_p]|$. Define
 $$\mathcal R:= \oplus_{p \in Sing(S)} \mathcal R_p\subset H^2(S', \mathbb{Z})_{free}.$$ 

Quotient singularities are log-terminal, thus the adjunction formula can be written as $$K_{S'} \sim f^{*}K_S -
 \sum_{p \in Sing(S)}{\mathcal{D}_p},$$ where $\mathcal{D}_p = \sum(a_jA_j)$ is an effective
 $\mathbb{Q}$-divisor with $0 \leq a_j < 1$  supported on $f^{-1}(p)=\cup A_j$   for each singular point $p$.
It implies that
\[K^2_S = K^2_{S'} - \sum_{p}{\mathcal{D}_p^2}= K^2_{S'} +\sum_{p}{\mathcal{D}_pK_{S'}}.
\]
The coefficients $a_j$ of $\mathcal{D}_p$ can
be computed by solving the system of equations
$$\mathcal{D}_pA_j=-K_{S'}A_j=2+A_j^2$$  given by the adjunction formula for each exceptional curve
$A_j\subset f^{-1}(p)$.

Let $[n_1, n_2, ..., n_l]$ denote a Hirzebruch-Jung continued
fraction, i.e.,
 $$[n_1, n_2, ..., n_l]= n_1 - \dfrac{1}{n_2-\dfrac{1}{\ddots -
\dfrac{1}{n_l}}}.$$

For a fixed  Hirzebruch-Jung continued fraction $w = [n_1, n_2,
\ldots, n_l]$, we define
        \begin{enumerate}
        \item $|w|=q$, the order of
the cyclic singularity corresponding to $w$, i.e.,
$w=\dfrac{q}{q_1}$ with $1\le q_1<q, \,\, \gcd(q, q_1)=1$. Note
that $|w|$ is the absolute value of the determinant of the
intersection matrix corresponding to $w$. We also define
\item   $u_j :=  |[n_1, n_2, \ldots, n_{j-1}]|\,\,\,
                (2\leq j\leq l+1), \quad u_0=0, \,\,u_1=1$,
 \item   $v_j := |[n_{j+1}, n_{j+2}, \ldots, n_l]|
                \,\,\, (0\leq j\leq l-1), \quad v_l=1,
                \,\,v_{l+1}=0$.\\ Note that   $u_{l+1}=v_0=|[n_1, n_2, \ldots, n_{l}]|=|w|$.
 \end{enumerate}
\bigskip

For a cyclic singularity $p$, the coefficients of
$\mathcal{D}_p$ can be expressed in terms of $v_j$ and $u_j$.

\begin{lemma}\label{Dp}$($\cite{HK2}, Lemma 3.1$)$
Let $p\in S$ be a cyclic singular point of order $q$. Assume that
$f^{-1}(p)$ has $l$ components $A_1, \ldots, A_l$ with $A_i^2 =
-n_i$ forming a string of smooth rational curves
$\overset{-n_1}{\circ}-\overset{-n_2}\circ-\cdots-\overset{-n_l}\circ$.
Then \begin{enumerate} \item $\mathcal{D}_p
=\overset{l}{\underset{j = 1}{\sum}} \big( 1 - \dfrac{v_{j} +
      u_{j}}{q} \big) A_{j},$
      \item $\mathcal{D}_pK_{S'}=-\mathcal{D}_p^2
=\overset{l}{\underset{j = 1}{\sum}} \big( 1 - \dfrac{v_{j} +
      u_{j}}{q} \big) (n_{j}-2),$
\end{enumerate}
\end{lemma}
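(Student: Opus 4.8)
The plan is to solve the tridiagonal linear system already exhibited above for the coefficients of $\mathcal{D}_p$, and then to recognize the solution in terms of Hirzebruch--Jung continuants. Write $\mathcal{D}_p = \sum_{j=1}^{l} a_j A_j$. Since inside $f^{-1}(p)$ the curve $A_j$ meets only $A_{j-1}$ and $A_{j+1}$, each once and transversally, the relations $\mathcal{D}_p\cdot A_j = -K_{S'}\cdot A_j = 2 + A_j^2 = 2 - n_j$ read
\[
a_{j-1} - n_j a_j + a_{j+1} = 2 - n_j \qquad (1 \le j \le l),
\]
with the convention $a_0 = a_{l+1} = 0$. The coefficient matrix $M := (A_i\cdot A_j)$ is negative definite, hence invertible, so the solution is unique and it suffices to check that the asserted values satisfy these equations.

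Substituting $b_j := 1 - a_j$, the system is seen to be equivalent to
\[
n_j b_j = b_{j-1} + b_{j+1} \qquad (1 \le j \le l), \qquad b_0 = b_{l+1} = 1 .
\]
I would then verify that $b_j = (u_j + v_j)/q$ solves this. The key input is that the defining relation $[n_1,\dots,n_k] = n_1 - 1/[n_2,\dots,n_k]$ for Hirzebruch--Jung continued fractions forces three-term recurrences on the numerators: one gets $u_{j+1} = n_j u_j - u_{j-1}$ for $1 \le j \le l$ (with $u_0 = 0$, $u_1 = 1$, $u_{l+1} = q$) and, symmetrically, $v_{j-1} = n_j v_j - v_{j+1}$ for $1 \le j \le l$ (with $v_{l+1} = 0$, $v_l = 1$, $v_0 = q$). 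Hence both $(u_j)$ and $(v_j)$, and therefore their sum, satisfy $n_j x_j = x_{j-1} + x_{j+1}$ for $1 \le j \le l$; dividing by $q$ gives the interior equations for the $b_j$, while the boundary conditions are automatic, since $b_0 = (u_0 + v_0)/q = q/q = 1$ and $b_{l+1} = (u_{l+1} + v_{l+1})/q = q/q = 1$. This proves (1).

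Part (2) is then immediate. Each $A_j$ is a smooth rational curve, so adjunction gives $K_{S'}\cdot A_j = -2 - A_j^2 = n_j - 2$, and therefore
\[
\mathcal{D}_p K_{S'} = \sum_{j=1}^{l} a_j\,(K_{S'}\cdot A_j) = \sum_{j=1}^{l} a_j (n_j - 2) = \sum_{j=1}^{l} \Big( 1 - \frac{v_j + u_j}{q} \Big)(n_j - 2),
\]
while $\mathcal{D}_p^2 = \sum_{j} a_j\,(\mathcal{D}_p\cdot A_j) = \sum_{j} a_j (2 - n_j) = -\mathcal{D}_p K_{S'}$.

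The only delicate point — and the step I would write out with care — is the bookkeeping for the continuant recurrences: pinning down the correct index ranges and the four boundary values $u_0, u_{l+1}, v_0, v_{l+1}$, after which the verification is purely mechanical. An alternative is to invoke the classical formula that the inverse of the negative of the intersection matrix of the string $[n_1,\dots,n_l]$ has $(i,j)$-entry $u_{\min(i,j)} v_{\max(i,j)}/q$, and to read off the $a_j$ from the identity $\mathbf{a} = \mathbf{1} + M^{-1}(\mathbf{e}_1 + \mathbf{e}_l)$, where $\mathbf{1}$ is the all-ones vector and $\mathbf{e}_1, \mathbf{e}_l$ the relevant standard basis vectors; this identity holds because the row sums of $M$ equal $2 - n_j$ in the interior and $1 - n_j$ at the two ends, so $M\mathbf{1}$ and the right-hand vector of the system differ exactly by $\mathbf{e}_1 + \mathbf{e}_l$. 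This route, however, merely repackages the same continued-fraction identities.
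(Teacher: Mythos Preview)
The paper does not supply its own proof of this lemma; it is quoted verbatim from \cite{HK2}, Lemma~3.1, and used as input. Your argument is correct and is the standard one: solve the tridiagonal system $\mathcal{D}_p\cdot A_j = 2-n_j$ by the substitution $b_j=1-a_j$, reduce to the homogeneous recurrence $n_j b_j=b_{j-1}+b_{j+1}$ with boundary values $b_0=b_{l+1}=1$, and verify via the continuant recurrences $u_{j+1}=n_j u_j-u_{j-1}$, $v_{j-1}=n_j v_j-v_{j+1}$ that $b_j=(u_j+v_j)/q$ works; part~(2) then follows from adjunction and the defining relation $\mathcal{D}_p\cdot A_j=2-n_j$. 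There is nothing to compare against in this paper, but nothing is missing from your write-up either.
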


In the rest of this section, we assume that $S$ is a
$\mathbb{Q}$-homology projective plane with cyclic singularities. Then $p_g(S')=q(S')=0$, thus
$$Pic(S')\cong H^2(S', \mathbb{Z}).$$ Denote  by $$\bar{\mathcal
R}\subset Pic(S')_{free}:=Pic(S')/(torsion)$$ the primitive closure of $\mathcal
R\subset Pic(S')_{free}$.

\begin{lemma}\label{general}$($\cite{HK2}, Lemma 3.7$)$ Let $S$ be a $\mathbb{Q}$-homology
projective plane with cyclic singularities and $f:S' \rightarrow
S$ be a minimal resolution. Assume that $K_S$ is not numerically
 trivial. Then the following hold true.
\begin{enumerate}
\item $D:=|\det(\mathcal R)|
K_S^2$ is a nonzero square number.
\item ${\rm disc}(\bar{\mathcal R})$ is a cyclic group of order $|\det(\bar{\mathcal R})|=\frac{|\det(\mathcal R)|}{c^2}$ where $c$ is the order of $\bar{\mathcal R}/\mathcal R$.
\item Define $$D':=|\det(\bar{\mathcal R})|
K_S^2=\frac{D}{c^2}.$$ Then $Pic(S')_{free}$ is generated over
$\mathbb{Z}$ by the numerical equivalence classes of exceptional
curves, an element $T\in Pic(S')_{free}$ giving a generator of
$\bar{\mathcal R}/\mathcal R$ and a $\mathbb{Q}$-divisor of the
form
$$M = \frac{1}{\sqrt{D'}} f^*K_S +  z, $$ where $z$ is a generator of ${\rm
disc}(\bar{\mathcal R})$, hence of the form $z= \underset{p \in
Sing(S)}{\sum} b_p e_p$ for some integers $b_p$, where $e_p$ is a
generator of ${\rm disc}(\mathcal R_p)$.
\item For each singular point $p$, denote by
$A_{1, p}, A_{2, p}, \ldots, A_{l_p, p}$ the exceptional curves of
$f$ at $p$ and by $q_p$ the order of the local fundamental group
at $p$. Then every element $E\in Pic(S')_{free}$ can be written
uniquely as
\begin{equation}\label{E}
E \sim mM + \underset{p \in Sing(S)}{\sum} \overset{l_p}{\underset{i
= 1}{\sum}} a_{i, p} A_{i,p}
\end{equation}
 for some integer $m$
and some $a_{i,p}\in \frac{1}{c}\mathbb{Z}$ for all $i, p$.
\item $E$ is supported on $f^{-1}(Sing(S))$ if and only if $m=0$.
Moreover, if $E$ is effective (modulo a torsion) and not supported
on $f^{-1}(Sing(S))$, then $m>0$ when $K_S$ is ample, and $m<0$
when $-K_S$ is ample.
\end{enumerate}
\end{lemma}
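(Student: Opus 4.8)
The plan is to prove the lemma by pure lattice theory inside $L:=\Pic(S')_{free}$, which by the exponential sequence (recall $p_g(S')=q(S')=0$) equals $H^2(S',\mathbb{Z})_{free}$ and is therefore a unimodular lattice under cup product. First I would pin down the orthogonal complement of $\bar{\mathcal R}$ in $L$. Since $S$ is a $\mathbb{Q}$-homology projective plane we have $b_2(S)=1$, and since $f$ contracts precisely the exceptional curves spanning $\mathcal R$ (linearly independent, $\mathcal R$ being negative definite), a rank count on $H^2$ gives $b_2(S')=1+\rank\mathcal R$; hence $\bar{\mathcal R}^{\perp}\subset L$ has rank $1$. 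Because $\rho(S)=b_2(S)=1$ and $K_S$ is not numerically trivial, $K_S$ is a nonzero rational multiple of an ample class, so $K_S^2>0$ and $\bar{\mathcal R}^{\perp}=\mathbb{Z}H$ is \emph{positive} definite with $H$ a primitive class. Decomposing the integral class $K_{S'}=f^{*}K_S-\sum_p\mathcal D_p$ along $L\otimes\mathbb{Q}=(\mathbb{Q}H)\oplus(\bar{\mathcal R}\otimes\mathbb{Q})$ — note $f^{*}K_S\cdot A_{i,p}=K_S\cdot f_{*}A_{i,p}=0$ while each $\mathcal D_p$ is supported on the exceptional curves — one gets $f^{*}K_S=\tfrac{K_{S'}\cdot H}{H^2}\,H$; set $\mu:=K_{S'}\cdot H\in\mathbb{Z}$, which is nonzero since $(f^{*}K_S)^2=K_S^2>0$.

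Next I would invoke the standard description of a primitive sublattice $N$ of a unimodular lattice $L$ with orthogonal complement $T=N^{\perp}$: the glue group $L/(N\oplus T)$ maps isomorphically onto both $\disc(N)$ and $\disc(T)$, this induces an anti-isometry $\disc(N)\cong\disc(T)$, $L$ is the corresponding graph overlattice of $N\oplus T$, and in particular $|\det N|=|\det T|=[L:N\oplus T]$. Applied with $N=\bar{\mathcal R}$ and $T=\mathbb{Z}H$ it yields at once $|\det\bar{\mathcal R}|=H^2$ and $\disc(\bar{\mathcal R})\cong\disc(\mathbb{Z}H)=\mathbb{Z}/H^2\mathbb{Z}$, which is cyclic; combined with the elementary identity $|\det\mathcal R|=c^{2}|\det\bar{\mathcal R}|$ for the index $c=[\bar{\mathcal R}:\mathcal R]$ this is part (2). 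For part (1), $K_S^2=(f^{*}K_S)^2=\mu^2/H^2=\mu^2/|\det\bar{\mathcal R}|$, hence $D=|\det\mathcal R|\,K_S^2=c^{2}|\det\bar{\mathcal R}|\cdot\mu^2/|\det\bar{\mathcal R}|=(c\mu)^2$, a nonzero square; the same computation gives $D'=|\det\bar{\mathcal R}|\,K_S^2=\mu^2$, so $\tfrac{1}{\sqrt{D'}}f^{*}K_S=\mathrm{sgn}(\mu)\,H/H^2$ is, up to sign, the dual generator of $\mathbb{Z}H$ and thus a generator of $\disc(\mathbb{Z}H)$.

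For part (3) one simply reads off the graph overlattice. Choose a lift $z\in\bar{\mathcal R}^{*}\subset\mathcal R\otimes\mathbb{Q}$ of the generator of $\disc(\bar{\mathcal R})$ matched with $\tfrac{1}{\sqrt{D'}}f^{*}K_S$ under the glue isomorphism, and put $M:=\tfrac{1}{\sqrt{D'}}f^{*}K_S+z\in L$. Since the cyclic group $L/(\bar{\mathcal R}\oplus\mathbb{Z}H)$ is generated by the image of $M$ we get $L=(\bar{\mathcal R}\oplus\mathbb{Z}H)+\mathbb{Z}M$; moreover $H=\mathrm{sgn}(\mu)(H^2M-H^2z)\in\bar{\mathcal R}+\mathbb{Z}M$ because $H^2z\in\bar{\mathcal R}$ (as $z$ has order $H^2$ in $\disc(\bar{\mathcal R})$), so in fact $L=\bar{\mathcal R}+\mathbb{Z}M$. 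As $\disc(\mathcal R)=\bigoplus_{p}\disc(\mathcal R_p)$ with each summand cyclic — this is the one place the hypothesis of \emph{cyclic} singularities is used — a lift $z$ can be written $z=\sum_{p}b_{p}e_{p}$ with $b_{p}\in\mathbb{Z}$ and $e_{p}$ a fixed generator of $\disc(\mathcal R_p)$; together with a class $T$ lifting a generator of $\bar{\mathcal R}/\mathcal R$, the classes of the $A_{i,p}$, $T$ and $M$ then generate $L$. Parts (4) and (5) follow formally: for $E\in L$ put $m:=\mathrm{sgn}(\mu)(E\cdot H)\in\mathbb{Z}$, so that the $\mathbb{Q}H$-component of $E-mM$ vanishes; then $E-mM\in L\cap(\mathcal R\otimes\mathbb{Q})=\bar{\mathcal R}\subset\tfrac{1}{c}\mathcal R$ (using $c\bar{\mathcal R}\subset\mathcal R$), which is the expansion \eqref{E} with all $a_{i,p}\in\tfrac{1}{c}\mathbb{Z}$; uniqueness holds because the $\mathbb{Q}H$-component of $M$ is nonzero and the $A_{i,p}$ are linearly independent, and $m=0$ precisely when $E\in\bar{\mathcal R}$, i.e.\ when $E$ is supported on $f^{-1}(Sing(S))$. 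Finally, if $E$ is effective modulo torsion and not so supported, then $f_{*}E$ is a nonzero effective Weil divisor and $E\cdot f^{*}K_S=K_S\cdot f_{*}E$ has the sign of $K_S$; but $E\cdot f^{*}K_S=m\,(M\cdot f^{*}K_S)=m\,|\mu|/|\det\bar{\mathcal R}|$ has the sign of $m$, forcing $m>0$ when $K_S$ is ample and $m<0$ when $-K_S$ is ample.

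The main obstacle, beyond keeping the lattice bookkeeping straight, lies in the generation statement of part (3): it is automatic that $\disc(\bar{\mathcal R})$ is cyclic (its orthogonal complement has rank $1$), but one also needs $\bar{\mathcal R}/\mathcal R$ to be cyclic so that a single extra class $T$ suffices. That does not follow from abstract lattice theory; it has to be extracted from the constraints on the isotropic subgroup $\bar{\mathcal R}/\mathcal R\subset\bigoplus_{p}\mathbb{Z}/q_{p}$ inside the discriminant form of $\mathcal R$, together with the short list of cyclic-singularity configurations that can occur on a $\mathbb{Q}$-homology projective plane. Everything else is routine once the rank-one orthogonal complement and the unimodular-overlattice dictionary are in place.
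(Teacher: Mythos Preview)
The paper does not give its own proof of this lemma; it is quoted from \cite{HK2}, Lemma 3.7, and used as a black box. So there is nothing in the present text to compare your argument against.

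That said, your lattice-theoretic argument is correct and is the natural one (and almost certainly what \cite{HK2} does): identify $\bar{\mathcal R}^{\perp}$ as a rank-one positive-definite lattice $\mathbb{Z}H$, invoke the standard discriminant-form/overlattice dictionary for a primitive sublattice of a unimodular lattice to get $|\det\bar{\mathcal R}|=H^2$ and $\disc(\bar{\mathcal R})$ cyclic, write $f^{*}K_S=(\mu/H^2)H$ with $\mu=K_{S'}\cdot H\in\mathbb{Z}$, and read off $D=(c\mu)^2$, $D'=\mu^2$. The construction of $M$ as the glue class, the decomposition \eqref{E} via orthogonal projection, and the sign statement in (5) via the projection formula $E\cdot f^{*}K_S=K_S\cdot f_{*}E$ are all exactly right.

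Your closing caveat is well taken and is the only genuine issue. The wording of part (3), by naming a \emph{single} element $T$ generating $\bar{\mathcal R}/\mathcal R$, presupposes that this quotient is cyclic; nothing in the rank-one-complement argument forces that, and it requires a separate check using the specific arithmetic of cyclic-quotient configurations on $\mathbb{Q}$-homology projective planes. However, for the applications actually made in this paper --- Proposition \ref{formula}, Proposition \ref{3}, and the computations in Sections 2--3 --- only the shape of $M$ and parts (1), (4), (5) are used, and for those your proof is complete: the expansion \eqref{E} follows from $E-mM\in L\cap(\mathcal R\otimes\mathbb{Q})=\bar{\mathcal R}\subset\tfrac{1}{c}\mathcal R$ regardless of whether $\bar{\mathcal R}/\mathcal R$ is cyclic. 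So the one point you flag as an obstacle is real for the lemma as stated, but irrelevant to everything the paper does with it.
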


Let $E$ be a divisor on $S'$. Then by Lemma \ref{general}(4), the
numerical equivalence class of $E$ can be written as the form
\eqref{E}. Then one can express the intersection numbers $EK_{S'}$
and $E^2$ in terms of the intersection numbers $EA_{j,p}$.

\begin{proposition}\label{formula}$($\cite{HK2}, Proposition
4.2$)$ Assume that  $K_S$ is not numerically trivial.
 Let $E$ be a divisor on $S'$. Write the numerical equivalence class of $E$ as the form \eqref{E}. Then the following hold
true.
    \begin{enumerate}
     \item $
          EK_{S'}
 = \dfrac{m}{\sqrt{D'}}K^2_S - \underset{p}{\sum} \overset{ l_p}{\underset{j = 1}{\sum}}
      \big( 1 - \dfrac{v_{j,p} + u_{j,p}}{q_p} \big) EA_{j,p}.$
        \item $ E^2 =\dfrac{m^2}{D'}K^2_S-  \underset{p}{\sum}
\overset{l_p}{\underset{j = 1}{\sum}}\Big(\overset{j}{\underset{k
= 1}{\sum}} \dfrac{v_{j,p}
  u_{k,p}}{q_p}(EA_{k,p}) + \overset{l_p}{\underset{k = j+1}{\sum}}
\dfrac{v_{k,p}  u_{j,p}}{q_p}(EA_{k,p})
 \Big)EA_{j,p}.$

 \noindent\medskip If, for each $p \in
                Sing(S)$, $E$ has a non-zero intersection number with at most $2$ components of $f^{-1}(p)$,
                i.e., $EA_{j,p}=0$ for $j\neq s_p, t_p$ for some $s_p$ and $t_p$ with $1\le s_p< t_p\le l_p$,
                then\\
                $ E^2  =   \dfrac{m^2}{D'} K^2_S - \underset{p}{\sum} \Big(  \dfrac{v_{s_p} u_{s_p}}{q_p}
                (EA_{s_p})^2+ \dfrac{v_{t_p} u_{t_p}}{q_p} (EA_{t_p})^2
                + \dfrac{2 v_{t_p} u_{s_p}}{q_p} (EA_{s_p})(EA_{t_p}) \Big).$
    \end{enumerate}
\end{proposition}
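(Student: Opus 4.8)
The plan is to reduce both identities to a single orthogonal decomposition of $E$ in $H^2(S',\bbQ)=\Pic(S')\otimes\bbQ$ and then feed in the explicit description of the dual basis of a Hirzebruch--Jung string. First I would record the geometric input. Each $A_{i,p}$ is contracted by $f$, so the projection formula gives $f^*K_S\cdot A_{i,p}=0$ and $(f^*K_S)^2=K_S^2$, which is positive by Lemma~\ref{general}(1); and the class $z=\sum_p b_pe_p$ appearing in $M$ lies in $\calR\otimes\bbQ$, hence is orthogonal to $f^*K_S$. Writing the numerical class of $E$ in the form \eqref{E} and substituting $M=\tfrac1{\sqrt{D'}}f^*K_S+z$, I obtain
$$E\sim \frac{m}{\sqrt{D'}}\,f^*K_S+\sum_{p\in Sing(S)}F_p,\qquad F_p:=m b_pe_p+\sum_{i=1}^{l_p}a_{i,p}A_{i,p}\in\calR_p\otimes\bbQ .$$
Since the exceptional loci over distinct points are pairwise disjoint and orthogonal to $f^*K_S$, this is an orthogonal sum, and it immediately yields the only two facts about $E$ that will be used: $E\cdot f^*K_S=\tfrac{m}{\sqrt{D'}}K_S^2$ and $EA_{j,p}=F_pA_{j,p}$ for every $j,p$.

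Part (1) is then one line. By the adjunction formula $K_{S'}\sim f^*K_S-\sum_p\mathcal D_p$ and Lemma~\ref{Dp}(1),
$$EK_{S'}=E\cdot f^*K_S-\sum_p\sum_{j=1}^{l_p}\Bigl(1-\frac{v_{j,p}+u_{j,p}}{q_p}\Bigr)(EA_{j,p})=\frac{m}{\sqrt{D'}}K_S^2-\sum_p\sum_{j=1}^{l_p}\Bigl(1-\frac{v_{j,p}+u_{j,p}}{q_p}\Bigr)(EA_{j,p}).$$

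For part (2) the orthogonal splitting gives $E^2=\tfrac{m^2}{D'}K_S^2+\sum_pF_p^2$, so the task is to express each $F_p^2$ through the integers $c_j:=F_pA_{j,p}=EA_{j,p}$. Fixing $p$ and dropping it from the notation: since $\calR_p$ is negative definite, $F\mapsto(FA_j)_j$ is an isomorphism $\calR_p\otimes\bbQ\to\bbQ^{l}$, so writing $F_p=\sum_ix_iA_i$ and $G=(A_iA_j)$ one has $c=Gx$ and hence $F_p^2=x^\top Gx=c^\top G^{-1}c$. The only nontrivial ingredient is the classical formula for the inverse intersection matrix of a Hirzebruch--Jung string (equivalently, the dual basis $A_j^\vee$ with $A_j^\vee A_k=\delta_{jk}$),
$$(G^{-1})_{jk}=-\frac{u_{\min(j,k)}\,v_{\max(j,k)}}{q},$$
which I would prove by induction on $l$ from the continuant recursions $u_{j+1}=n_ju_j-u_{j-1}$, $v_{j-1}=n_jv_j-v_{j+1}$ and the relation $u_{j+1}v_j-u_jv_{j+1}=q$ (alternatively, one can quote it alongside Lemma~\ref{Dp}). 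Substituting it into $F_p^2=\sum_{j,k}(G^{-1})_{jk}c_jc_k$ and grouping the diagonal terms with the two triangular halves of the off-diagonal ones, using $(G^{-1})_{jk}=(G^{-1})_{kj}$, rewrites $F_p^2$ as $-\sum_j\bigl(\sum_{k\le j}\tfrac{v_ju_k}{q}c_k+\sum_{k>j}\tfrac{v_ku_j}{q}c_k\bigr)c_j$; summing over $p$ gives exactly the displayed formula for $E^2$. The final assertion is the specialization $c_j=0$ for $j\notin\{s_p,t_p\}$, in which only $(G^{-1})_{s_ps_p}$, $(G^{-1})_{t_pt_p}$ and $(G^{-1})_{s_pt_p}$ survive, producing the three-term expression.

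The main obstacle is the entrywise formula for $G^{-1}$ in terms of the continued-fraction data: this is where the numbers $v_{j,p},u_{j,p}$ and the denominators $q_p$ are manufactured, and it carries essentially all of the Hirzebruch--Jung combinatorics. Everything surrounding it — the orthogonal splitting $H^2(S',\bbQ)=\bbQ f^*K_S\perp\bigoplus_p\calR_p\otimes\bbQ$, the projection formula, and the adjunction formula in the form of Lemma~\ref{Dp} — is routine bookkeeping, and once the $G^{-1}$ formula is in hand both identities fall out mechanically.
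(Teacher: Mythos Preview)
Your argument is correct. Note, however, that the paper does not supply its own proof of this proposition: it is quoted verbatim from \cite{HK2}, Proposition~4.2, and used as a black box. Your approach---orthogonally splitting $E$ along $\bbQ f^*K_S\perp\bigoplus_p\calR_p\otimes\bbQ$, reading off part~(1) from adjunction and Lemma~\ref{Dp}, and reducing part~(2) to the explicit inverse of the Hirzebruch--Jung intersection matrix $(G^{-1})_{jk}=-u_{\min(j,k)}v_{\max(j,k)}/q$---is the natural one and is essentially what the cited reference does; there is no meaningful divergence to report.
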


\begin{proposition}\label{3}$($\cite{K11}, Proposition
2.4$)$ Let $S$ be a ${\mathbb Q}$-homology projective plane with
$3$ singular points $p_1, p_2, p_3$ of type
$\dfrac{1}{7}(1,5)=[2,2,3]$. Let $A_{i1}$, $A_{i2}$, $A_{i3}$ be
the three components of $f^{-1}(p_i)$ of type $[2,2,3]$. Then the
following hold true.
\begin{enumerate} \item $K_S^2=\dfrac{9}{7},\,\,\, D=3^27^2,\,\,\,
D'=3^2$. \item If $S'$ contains a $(-2)$-curve $E$ not contracted
by $f:S'\to S$, then \begin{enumerate} \item
$\sum_{i=1}^3(A_{i1}E+2A_{i2}E)$ is divisible by $3$,  \item
$\sum_{i=1}^3(A_{i1}E+A_{i2}E+A_{i3}E)\ge 3$,
\end{enumerate}
\end{enumerate}
\end{proposition}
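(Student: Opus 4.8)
The plan is to treat the two assertions separately.

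\textbf{Part (1).} First I would resolve the singularity: $\frac{1}{7}(1,5)$ corresponds to the Hirzebruch--Jung continued fraction $[2,2,3]$, with data $u_1=1$, $u_2=2$, $u_3=3$, $v_1=5$, $v_2=3$, $v_3=1$ and $q=7$. Lemma \ref{Dp} then gives $\mathcal{D}_{p_i}=\tfrac17A_{i1}+\tfrac27A_{i2}+\tfrac37A_{i3}$ and $\mathcal{D}_{p_i}K_{S'}=\tfrac37$ for each $i$. Since $S$ is a $\mathbb{Q}$-homology projective plane with cyclic singularities, $p_g(S')=q(S')=0$, so $\chi(\mathcal{O}_{S'})=1$; and $e(S')=12$, because $e(S)=3$ and resolving each $\tfrac17(1,5)$ point replaces a point by a string of three $\mathbb{P}^1$'s (of Euler number $4$). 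Noether's formula then forces $K_{S'}^2=0$, and combining this with $K_S^2=K_{S'}^2+\sum_i\mathcal{D}_{p_i}K_{S'}$ gives $K_S^2=\tfrac97$. As each $\tfrac17(1,5)$ point has local fundamental group of order $7$ we get $|\det(\mathcal{R})|=7^3$, hence $D=|\det(\mathcal{R})|K_S^2=3^27^2$.

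For $D'=D/c^2$ with $c=|\bar{\mathcal{R}}/\mathcal{R}|$, I would show $c=7$. The orthogonal complement $\bar{\mathcal{R}}^{\perp}$ in the unimodular lattice $\Pic(S')_{free}$ is a rank-one positive-definite lattice $\mathbb{Z}v$ with $v^2=|\det(\bar{\mathcal{R}})|=7^3/c^2$. The divisor class $7f^*K_S=7K_{S'}+\sum_i(A_{i1}+2A_{i2}+3A_{i3})$ is integral and orthogonal to every exceptional curve, hence lies in $\mathbb{Z}v$; writing $7f^*K_S=kv$ and taking self-intersections, $63=49K_S^2=k^2v^2=k^2\cdot7^3/c^2$, so $k^2=9c^2/49$ and therefore $7\mid c$. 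Since a priori $c^2\mid7^3$, this forces $c=7$, whence $D'=441/49=3^2$.

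\textbf{Part (2).} Let $E$ be a $(-2)$-curve not contracted by $f$, and set $x_i=A_{i1}E$, $y_i=A_{i2}E$, $w_i=A_{i3}E$; these are nonnegative integers since $E$ is distinct from every $A_{ij}$, and $E^2=-2$ while $EK_{S'}=0$ by adjunction. Writing the numerical class of $E$ in the form \eqref{E} with integer coefficient $m$ and feeding the Part (1) data ($\sqrt{D'}=3$, $K_S^2=\tfrac97$, $q_{p_i}=7$, coefficients $\tfrac17,\tfrac27,\tfrac37$) into Proposition \ref{formula}(1) gives
$$0=\frac{m}{3}\cdot\frac97-\frac17\sum_{i=1}^3(x_i+2y_i+3w_i),\qquad\text{so}\qquad\sum_{i=1}^3(x_i+2y_i+3w_i)=3m;$$
subtracting the multiple of $3$ given by $3\sum_iw_i$ proves (a). Likewise Proposition \ref{formula}(2) (general form) gives
$$-2=E^2=\frac{m^2}{7}-\frac17\sum_{i=1}^3Q_i,\qquad Q_i:=5x_i^2+6y_i^2+3w_i^2+6x_iy_i+2x_iw_i+4y_iw_i,$$
hence $\sum_{i=1}^3Q_i=m^2+14$.

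From the first identity $m\ge0$, and $m=0$ is impossible since it would force all $x_i=y_i=w_i=0$, contradicting $\sum_iQ_i=14$; so $m\ge1$ and $\sum_i(x_i+2y_i+3w_i)=3m\ge3$. For (b), suppose $\sum_i(x_i+y_i+w_i)\le2$. There are then only finitely many choices of the nonnegative integers $x_i,y_i,w_i$, and for each the equation $\sum_i(x_i+2y_i+3w_i)=3m$ either has no integer solution $m$ (when its left side is not divisible by $3$) or determines $m$, after which $\sum_iQ_i=m^2+14$ fails in every surviving case --- for instance a single $w_j=1$ gives $m=1$ but $\sum_iQ_i=3\ne15$, a single pair $x_j=y_j=1$ gives $m=1$ but $\sum_iQ_i=17\ne15$, and similarly in the remaining few cases. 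This contradiction proves (b). The step I expect to be the main obstacle is the determination $D'=3^2$ in Part (1): it is the one place requiring a genuine lattice-theoretic argument about the primitive closure $\bar{\mathcal{R}}\subset\Pic(S')_{free}$, rather than direct computation with Lemma \ref{Dp} and Proposition \ref{formula} together with the short finite case check.
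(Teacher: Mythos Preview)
Your proof is correct. The paper does not actually prove this proposition: it is quoted verbatim from \cite{K11}, Proposition 2.4, and stated without argument, so there is no ``paper's proof'' to compare against. Your derivation of Part~(1) via Lemma~\ref{Dp}, Noether's formula, and the lattice argument for $c=7$ is sound (note that $c^2\mid 7^3$ together with $c^2$ being a perfect square already forces $c\in\{1,7\}$, so the divisibility $7\mid c$ settles it). Your Part~(2) case analysis is complete: every configuration with $\sum_i(x_i+y_i+w_i)\le 2$ either violates the congruence $\sum_i(x_i+2y_i+3w_i)\equiv 0\pmod 3$ or the identity $\sum_iQ_i=m^2+14$, exactly as you indicate.
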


\section{Proof of Theorem \ref{main}}

The existence of a fake projective plane with $\Aut\cong 7:3$ was first proved in \cite{K06}, as a degree 21 cover of Ishida surface which is a $(2,3)$-elliptic surface.
 Mumford surface \cite{Mum} is also a degree 21 cover of Ishida surface, but not Galois. In terms of Prasad and Yeung \cite{PY}, Mumford surface  and Keum surface belong to the {\it same class}
in the sense that their fundamental groups are contained in the same maximal arithmetic subgroup of PU$(2,1)$. The ball quotient by this maximal subgroup has 4 singular   points, 3 of type $\frac{1}{3}(1,2)$ and one of type $\frac{1}{7}(1,5)$, and its minimal resolution is Ishida surface \cite{Ish}. Ishida surface is a Dolgachev surface, a simply connected surface of Kodaira dimension 1 with $p_g=q=0$.

Throughout this section, $X$ will denote a fake projective plane with $\Aut(X)\cong 7:3$. Let $\sigma_7$ and $\sigma_3$ be automorphisms of $X$, of order 7 and 3, respectively. Then
$$\Aut(X)=\langle \sigma_7,\,\,\, \sigma_3 \rangle.$$

According to the explicit computation of  Cartwright and Steger \cite{CS2}, there are 6 such surfaces (3 pairs from 3 different classes), and these pairs are distinguished by the torsion group:
$$H_1(X, \mathbb{Z})=C_2^3,\,\,C_{2}^4\,\,\,{\rm or}\,\,C_2^{6}.$$ 

In the following proof of Theorem \ref{main}, we will use the structure of the quotient of $X$ by an order 7 automorphism. The proof is split into the 3 cases: the minimal resolution of the quotient is a $(2,3)$-, $(2,4)$-, $(3,3)$-elliptic surface.

\subsection{The case of $(2,3)$-elliptic surface}

First we refine the result of \cite{K08} and \cite{K12} in the $(2,3)$-elliptic surface case.

\begin{theorem}\label{2,3}
Let $Z$ be a ${\mathbb Q}$-homology projective plane with $4$
singular points, $3$ of type $\frac{1}{3}(1,2)$ and one of type
$\frac{1}{7}(1,5)$. Assume that its minimal resolution $\tilde{Z}$
is a $(2,3)$-elliptic surface.
 Then the following hold true.
\begin{enumerate}
\item The triple cover $Y$ of $Z$ branched at the three singular
points of type $\frac{1}{3}(1,2)$ is a ${\mathbb Q}$-homology
projective plane with $3$ singular points of type
$\frac{1}{7}(1,5)$. The degree $7$ cover $X$ of $Y$ branched at the three singular
points is a fake projective plane.
\item The elliptic fibration on $\tilde{Z}$ has four $I_3$-fibres, whose union contains the eight exceptional $(-2)$-curves. 
\item The minimal resolution $\tilde{Y}$ of $Y$ is a
$(2,3)$-elliptic surface, where every fibre of the elliptic
fibration on $\tilde{Z}$ does not split. 
\item The elliptic fibration on $\tilde{Y}$ has four singular fibres, one of type $I_9$ and three of type $I_1$, and each fibre has the same multiplicity as the corresponding fibre on $\tilde{Z}$.
\item The $I_9$-fibre on $\tilde{Y}$ has multiplicity $1$.
\end{enumerate}
\end{theorem}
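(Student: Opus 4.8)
The plan is to read off the numerical data from the elliptic fibration $g\colon\tilde Z\to\bbP^1$ and then transport it up the tower $X\to Y\to Z$. For (1) I would invoke \cite{K08}: there $Z$ is realized as $X/(7:3)$ with $Y=X/C_7$, and the remaining assertion — that the degree $7$ cover $X$ of $Y$ branched at the three singular points is a fake projective plane — follows from $K_X^2=7K_Y^2=9$, $e(X)=7(e(Y)-3)+3=3$, and $p_g(X)=q(X)=0$ (the last two because $X\to Y$ is quasi-\'etale, decomposing $(X\to Y)_*\mathcal O_X$ into eigensheaves and using $p_g(Y)=q(Y)=0$). So the real work is (2), (4), (5), together with pinning down the type of $\tilde Y$ in (3).

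\emph{Step 1 — the fibration on $\tilde Z$, giving (2).} Since $\tilde Z$ is a relatively minimal elliptic surface over $\bbP^1$ with $\chi(\mathcal O_{\tilde Z})=1$ and multiple fibres of multiplicities $2,3$, the canonical bundle formula gives $K_{\tilde Z}\equiv\frac16 F$ numerically, $F$ a general fibre. Hence every $(-2)$-curve $C$ satisfies $F\cdot C=6(K_{\tilde Z}\cdot C)=0$ and is vertical, while the unique $(-3)$-curve (the last component of the $[2,2,3]$-chain over the $\frac17(1,5)$-point) satisfies $F\cdot C=6$ and is a $6$-section. Thus the eight exceptional $(-2)$-curves are fibre components, forming four pairwise disjoint chains of type $A_2$ (one over each $\frac13(1,2)$-point, and the pair $B_1,B_2$ over the $\frac17(1,5)$-point). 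By the classification of singular fibres, a fibre containing one $A_2$-chain has $\ge 3$ components, and one containing two disjoint $A_2$-chains has $\ge 6$; since $\rank\NS(\tilde Z)=b_2(\tilde Z)=10$ and, by Shioda--Tate, $\rank\NS(\tilde Z)\ge 2+\sum_v(m_v-1)$, we get $\sum_v(m_v-1)\le 8$. A short case analysis then forces exactly four reducible fibres, each with exactly three components and containing one chain — so the eight $(-2)$-curves occur two per fibre. A three-component fibre is of type $I_3$ or $IV$; comparing with $\sum_v e(F_v)=e(\tilde Z)=12$, using $e(IV)=e(I_3)+1$ and that any further singular fibre (necessarily irreducible) has positive Euler number, excludes type $IV$ and any extra singular fibre. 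This is (2).

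\emph{Step 2 — transporting to $\tilde Y$, giving (3) and (4).} Write $F^{(1)},\dots,F^{(4)}$ for the four $I_3$-fibres, $F^{(i)}$ carrying the $A_2$-pair over $p_i$ ($i\le 3$) and $F^{(4)}$ the pair $B_1,B_2$, and $C_i$ for the residual $\bbP^1$-component of $F^{(i)}$. The base locus of $Z\dashrightarrow\bbP^1$ is the single point below the $(-3)$-curve, so $p_1,p_2,p_3$ are not base points, and since $Y\to Z$ is \'etale in codimension $1$ and ramified only over $p_1,p_2,p_3$, general fibres lift to \'etale triple covers. Over $t_i:=g(F^{(i)})$ ($i\le 3$) the cover is totally ramified at $p_i$ (with $Y$ smooth above $p_i$), which forces the triple cover of $C_i\cong\bbP^1$ to be \emph{connected} and totally ramified over the two points above $p_i$, hence a $\bbP^1$ by Riemann--Hurwitz; glued at the point above $p_i$ it is an irreducible nodal rational curve, so the fibre of $\tilde Y$ over $t_i$ is of type $I_1$. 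As these three fibres are connected, the Stein factorisation of $\tilde Y\to\bbP^1$ must be trivial (a nontrivial one would make the base a degree-$3$ cover of $\bbP^1$ totally ramified over $t_1,t_2,t_3$, hence of positive genus, contradicting $q(\tilde Y)=0$). So $\tilde Y\to\bbP^1$ is the pull-back of $g$ — no fibre splits — and $\kappa(\tilde Y)=1$, making $\tilde Y$ a relatively minimal elliptic surface over $\bbP^1$. Over $t_4$, where $Y\to Z$ is \'etale near the whole fibre, the (connected) fibre is a connected \'etale triple cover of $F^{(4)}\cong I_3$, i.e.\ an $I_9$; the count $9+3\cdot 1=12$ shows these four are all the singular fibres of $\tilde Y$. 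Pulling back reduced fibre divisors along the codimension-$1$-\'etale map $Y\to Z$ shows each fibre of $\tilde Y$ has the same multiplicity as the corresponding fibre of $\tilde Z$. For the type of $\tilde Y$ in (3): $\pi_1(\tilde Y)=\pi_1(Y)$ is a quotient of $\pi_1(X)$ — the natural map has image of index dividing $7$ in the finite group $\pi_1(Y)$, hence is onto — so $\pi_1(\tilde Y)$ is a quotient of the $2$-group $H_1(X,\bbZ)\in\{C_2^3,C_2^4,C_2^6\}$; as $\pi_1(\tilde Y)=C_{\gcd(a,b)}$, this excludes the $(3,3)$-type, and once $\pi_1(\tilde Y)=1$ is established we land in the $(2,3)$-type.

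\emph{Step 3 and the main obstacle — (5) and excluding $(2,4)$.} By Step 2 the $I_9$-fibre of $\tilde Y$ lies over $t_4$ and has the same multiplicity as $F^{(4)}$, so (5) is equivalent to: $F^{(4)}$ is not a multiple fibre of $\tilde Z$, i.e.\ neither of the two multiple fibres of the $(2,3)$-elliptic surface $\tilde Z$ lies over $t_4$. In the same spirit, excluding the $(2,4)$-type amounts to $\pi_1(\tilde Y)=1$, i.e.\ the $2$-torsion of $X$ dies in $Y=X/C_7$. This is where one must use the construction of $Z$ in \cite{K08}, \cite{K12} beyond its numerical shadow. The plan is to track multiple fibres and $2$-torsion line bundles through the quasi-\'etale covers $X\to Y\to Z$: the ramification of $X\to Z$ sits over the singular points (the three $\frac13(1,2)$-points for $Y\to Z$, the three $\frac17(1,5)$-points of $Y$ for $X\to Y$), which limits where multiple fibres can occur, and a multiple structure on $F^{(4)}$, or a nontrivial $2$-torsion class on $\tilde Y$, should be shown to conflict with $b_2(X)=1$ and the explicit torsion groups of $X$ from \cite{CS2}, by comparing the genus-$1$ pencils on $\tilde Z$, on $\tilde Y$, and on a resolution of $X\dashrightarrow\bbP^1$. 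I expect the precise bookkeeping of multiplicities and torsion through these branched covers to be the hard part; Steps 1 and 2 are routine by comparison.
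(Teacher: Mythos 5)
Your Steps 1 and 2 only re-derive assertions (1)--(4), which the paper simply quotes from \cite{K08} (Corollary 4.12) and \cite{K12}; whether or not every detail there is airtight (your one-line justification that multiplicities are preserved under $\tilde Y\to\tilde Z$, and the claim that $\pi_1(Y)$ is a quotient of $\pi_1(X)$, both need more care), they are not the content at stake. The theorem's only new assertion is (5), and for it you offer a plan, not a proof: ``should be shown to conflict with $b_2(X)=1$ and the explicit torsion groups of $X$'' is exactly the step you acknowledge as the hard part and leave open. So the proposal has a genuine gap at the one point where the paper has to work.

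For comparison, here is what actually closes it, and why your suggested direction is not enough. Suppose the $I_9$-fibre $F_0$ of $\tilde Y$ is multiple; on a $(2,3)$-elliptic surface its multiplicity is $2$ or $3$. If it is $3$, then the reduced fibre satisfies $F_0\sim 2K_{\tilde Y}$, so pushing down to $Y$ and pulling back by $\pi:X\to Y$ gives three effective divisors $\pi^*B'_j$ with $\pi^*B'_j\equiv 2L+t_j$, $t_j$ a $2$-torsion; invariance under $\sigma_7^*$ together with $\pi_1(Y)=\{1\}$ forces $t_j=0$. Then an intersection-number computation with Proposition \ref{formula} (using that the $(-3)$-curves $A_{i3}$ are $6$-sections, so $A_{i3}F_0=2$) pins down $k_{ij}=A_{i3}B_j=\delta_{ij}$, i.e.\ each curve $\pi^*B'_j$ passes through exactly two of the three $\sigma_7$-fixed points in a cyclic pattern; evaluating at these points shows $g_1^2,g_2^2,g_1g_2,g_3^2$ are four independent elements of $H^0(X,4L)$, contradicting $h^0(X,4L)=3$ (Lemma \ref{4L}). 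If the multiplicity is $2$, then $F_0\sim 3K_{\tilde Y}$, and the same torsion-killing argument makes $\pi^*B'_1$ an effective divisor in $|K_X|$, contradicting $p_g(X)=0$. Note that this argument lives on $X$ and uses the linear systems $|2L|$, $|4L|$ and the $2$-torsion-only structure of $\Pic(X)$; mere numerical data such as $b_2(X)=1$, or the abstract torsion groups from \cite{CS2}, do not by themselves decide which fibre of $\tilde Z$ carries the multiple structure --- indeed the paper stresses that this was left undetermined in \cite{K08}. Without some version of the above (or an equally concrete substitute), assertion (5) remains unproved in your write-up.
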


\begin{proof} The first 4 assertions are contained in \cite{K08}, Corollary 4.12, and \cite{K12}, Theorem 0.5. We need to prove the last assertion.

Let $y_1, y_2, y_3\in Y$ be the three singular points, and $$A_{i1},\,\,\, A_{i2},\,\,\,A_{i3}\subset \tilde{Y}$$ be the three exceptional curves of type
$[2,2,3]$ lying over $y_i$. Let $$A_{11}, A_{12}, B_1, A_{21}, A_{22}, B_2, A_{31}, A_{32},
B_3$$ be the nine components of the $I_9$-fibre $F_0$ in a
circular order. We need to prove that $F_0$ is non-multiple. Suppose that $F_0$ is multiple. Then its multiplicity is 2 or 3.

\medskip
(1) Suppose that the $I_9$-fibre $F_0$ on $\tilde{Y}$ has multiplicity 3.\\
Since a general fibre is numerically equivalent to $6K_{\tilde{Y}}$, we see that $$A_{11}+ A_{12}+ B_1+ A_{21}+ A_{22}+B_2+ A_{31}+A_{32}+B_3=F_0\sim 2K_{\tilde{Y}},$$ hence by  pushing forward to $Y$
$$B'_1+B'_2+B'_3\sim 2K_{Y},$$ where
$B'_i\subset Y$ is the image of $B_i$. The induced action of $\sigma_3$ on $\tilde{Y}$ has order 3 and rotates the  $I_9$-fibre. Let $$\pi: X\to X/\langle\sigma_7\rangle=Y$$ be the quotient map. Then 
$$\pi^*B'_1+\pi^*B'_2+\pi^*B'_3\sim 2\pi^*K_{Y}\equiv 2K_X.$$
The 3 curves $\pi^*B'_1$, $\pi^*B'_2$, $\pi^*B'_3$ are rotated by $\sigma_3$, but fixed by $\sigma_7$. 
Let $$L\in \Pic(X)$$ be a fixed ample generator. Then $2K_X\sim 6L$, hence
 $$\pi^*B'_1\equiv 2L+t$$
 for some 2-torsion $t$. Here we use that $X$ has 2-torsions only. By Lemma \ref{2L}
  $$\pi^*B'_1=\sigma_7^*(\pi^*B'_1)\equiv \sigma_7^*(2L)+\sigma_7^*(t)=2L+\sigma_7^*(t),$$
hence $$t=\sigma_7^*(t).$$  We know that
$$\pi_1(Y)\cong\pi_1(\tilde{Y})=\{1\},$$ in particular, $X$ cannot have a $\sigma_7^*$-invariant torsion. Thus $t=0$ and
$$\pi^*B'_1\equiv \pi^*B'_2\equiv \pi^*B'_3\equiv  2L.$$
Next we need to determine the intersection numbers $$k_{ij}:=A_{i3}B_j$$
for $i, j =1, 2, 3$. Since  $A_{i3}$ is a $(-3)$-curve, it is a
6-section, so 
$$2=A_{i3}F_0=k_{i1}+k_{i2}+k_{i3}+1,$$
so $k_{i1}+k_{i2}+k_{i3}=1$ for each $i$. Applying Proposition \ref{formula}(1)
to $E=B_j$, we get
$$0=B_jK_{\tilde{Y}}=\dfrac{m_j}{3}\dfrac{9}{7}-\dfrac{3}{7}(k_{1j}+k_{2j}+k_{3j}+1),$$
so $$m_j=k_{1j}+k_{2j}+k_{3j}+1,$$ where $m_j$ is the leading
coefficient of $B_j$ in the form \eqref{E}. Now by Proposition \ref{formula}(2),
$$-2=B_j^2=\dfrac{m_j^2}{9}\dfrac{9}{7}-\dfrac{1}{7}\{3(k_{1j}^2+k_{2j}^2+k_{3j}^2)+11+4k_{jj}+2k_{j+1,j}\},$$
so
$$m_j^2+14=3(k_{1j}^2+k_{2j}^2+k_{3j}^2)+11+4k_{jj}+2k_{j+1,j},$$
where $k_{43}=k_{13}$. It is easy to check that this system has a
unique solution;
$$(k_{ij})=
\begin{pmatrix} 1 & 0&0\\0&1&0 \\ 0&0 & 1 \end{pmatrix}.$$
In particular, $y_j,\, y_{j+1} \in B_j'$ and $y_{j+2}\notin B_j'$   
for $j=1, 2, 3$, where $y_{k+3}=y_k$. Let $x_j=\pi^{-1}(y_j)\in X$ be the fixed points of $\sigma_7$. Then  for $j=1, 2, 3$
$$x_j,\, x_{j+1} \in \pi^*B_j',\qquad x_{j+2}\notin\pi^*B_j',$$ 
where $x_{k+3}=x_k$.
Now let $g_j\in H^0(X, 2L)$ be a section giving the divisor $\pi^*B'_j$. Then the 4 sections
$$g_1^2,\, g_2^2,\, g_1g_2,\, g_3^2\in H^0(X, 4L)$$  
are linearly independent, which can be easily checked by evaluating any given linear dependence at each $x_j$. This implies $h^0(X, 4L)\ge 4$, contradicting Lemma \ref{4L}. 

\medskip
(2) Suppose that the $I_9$-fibre $F_0$ on $\tilde{Y}$ has multiplicity 2.\\ 
In this case, 
$$A_{11}+ A_{12}+ B_1+ A_{21}+ A_{22}+B_2+ A_{31}+A_{32}+B_3=F_0\sim 3K_{\tilde{Y}},$$
hence
$$B'_1+B'_2+B'_3\sim 3K_{Y},$$
and
$$\pi^*B'_1\sim\pi^*B'_2\sim\pi^*B'_3\sim 3L.$$
Write $$\pi^*B'_1\equiv 3L_0+t$$
where $L_0$ is an ample  generator such that $K_X=3L_0$ and $t$ is a torsion.
We know that $\pi^*B'_1$ is $\sigma_7^*$-invariant.   Since $L_0$ is $\sigma_7^*$-invariant by Lemma \ref{L0}, so is $t$.
Since $Y=X/\langle\sigma_7\rangle$ is simply connected, $X$ cannot have a $\sigma_7^*$-invariant torsion. Thus $t=0$. Then  
 $$\pi^*B'_1\equiv 3L_0=K_X,$$ hence $K_X$ is effective,
 contradicting $p_g(X)=0$. This completes the proof of Theorem \ref{2,3}.
\end{proof}

\bigskip
{\bf Proof of Theorem \ref{main}.}

\medskip
By Lemma \ref{H0}, it is enough to show that $H^0(X, 2L)=0$. Suppose that $$H^0(X, 2L)\neq 0.$$ By Lemma \ref{2L}, $\sigma_7^*(2L)\cong 2L$, i.e. 
$\sigma_7^*$ acts on the projective space ${\bf P}H^0(X, 2L)$. Every finite order automorphism of a projective space has a fixed point, so there is a $\sigma_7^*$-invariant curve $$C\in |2L|,$$ possibly reducible. Then there is a curve $$C'\subset Y=X/\langle\sigma_7\rangle$$ such that $C=\pi^*C'$. Since $C'^2=4/7$, we see that
$$C'\sim \dfrac{2}{3}K_Y.$$
Let $\tilde{C}\subset \tilde{Y}$ be the proper transform of $C'$ under the resolution $$f: \tilde{Y}\to Y.$$ Then
$$\tilde{C}\sim f^*C'-\sum_{k=1}^3{\mathcal{C}_k}\sim \dfrac{2}{3}f^*K_Y-\sum_{k=1}^3{\mathcal{C}_k}$$
where $\mathcal{C}_k$ is a $\mathbb{ Q}$-divisor supported on  $A_{k1}\cup A_{k2}\cup  A_{k3}$. For any exceptional curve $A_{ij}$, we have $A_{ij}f^*C'=0$, hence
$$0\le A_{ij}\tilde{C}=-A_{ij}\sum_{k=1}^3\mathcal{C}_k=-A_{ij}\mathcal{C}_i.$$
We know that 
$$K_{\tilde{Y}}\sim f^*K_Y-\frac17(A_{11}+2A_{12}+3A_{13})-\frac17(A_{21}+2A_{22}+3A_{23})- \frac17(A_{31}+2A_{32}+3A_{33}).$$
It follows that 
$$K_{\tilde{Y}}\tilde{C}=(f^*K_Y)(f^*C')+\sum_{i=1}^3\frac17(A_{i1}+2A_{i2}+3A_{i3})\mathcal{C}_i\le (f^*K_Y)(f^*C')=K_YC'=\dfrac{6}{7}.$$
Since $\tilde{Y}$ is a relatively minimal elliptic surface with Kodaira dimension 1, $K_{\tilde{Y}}$ is numerically effective, thus $K_{\tilde{Y}}\tilde{C}$ is a non-negative integer, hence
$$K_{\tilde{Y}}\tilde{C}=0.$$
Thus the curve $\tilde{C}$ is contained in a union of fibres. 

On the other hand, since the $I_9$-fibre has multiplicity 1, 
$$B'_1+B'_2+B'_3\sim 6K_{Y}$$
and
$$\pi^*B'_1+\pi^*B'_2+\pi^*B'_3\sim 6\pi^*K_{Y}\equiv 6K_X\sim 18L,$$
hence
$$\pi^*B'_1\sim\pi^*B'_2\sim\pi^*B'_3\sim 6L.$$
If $F_1$ is the reduced curve of the fibre with multiplicity  2 or 3, then it is irreducible and the image
$F_1'\subset Y$ satisfies
$F_1'\sim 3K_{Y}$ or $2K_{Y}$ and hence
$$\pi^*F_1'\sim 9L\quad {\rm or}\quad 6L.$$
Thus no union of irreducible components of fibres can be equal to 
the curve $\tilde{C}$.  The proof is completed.

\begin{remark}\label{rem23} One can determine the intersection numbers $$k_{ij}:=A_{i3}B_j,$$
where $A_{11}, A_{12}, B_1, A_{21}, A_{22}, B_2, A_{31}, A_{32},
B_3$ are the nine components of the $I_9$-fibre $F_0$ as above.
Since the $I_9$-fibre has multiplicity $1$, we
get
$$k_{i1}+k_{i2}+k_{i3}=5,$$ $$m_j=k_{1j}+k_{2j}+k_{3j}+1,$$
$$m_j^2+14=3(k_{1j}^2+k_{2j}^2+k_{3j}^2)+11+4k_{jj}+2k_{j+1,j}.$$
There are many solutions to this system. Among them two are symmetric with respect to the order 3 rotation $(A_{13},
B_1)\to (A_{23}, B_2)\to (A_{33}, B_3)\to (A_{13}, B_1)$;
$$(k_{ij})=
\begin{pmatrix} 2&1&2\\2&2&1 \\ 1&2&2 \end{pmatrix},\,\,\,\begin{pmatrix}1&3&1\\1&1&3 \\3&1&1 \end{pmatrix}.$$
Each of the two cases leads to a fake projective plane as shown in \cite{K06}. 
\end{remark}

\subsection{The case of $(2,4)$-elliptic surface}

First we refine the result of \cite{K08} and \cite{K11} in the $(2,4)$-elliptic surface case.

\begin{theorem}\label{2,4}
Let $Z$ be a ${\mathbb Q}$-homology projective plane with $4$
singular points, $3$ of type $\frac{1}{3}(1,2)$ and one of type
$\frac{1}{7}(1,5)$. Assume that its minimal resolution $\tilde{Z}$
is a $(2,4)$-elliptic surface.
 Then the following hold true.
\begin{enumerate}
\item The triple cover $Y$ of $Z$ branched at the three singular
points of type $\frac{1}{3}(1,2)$ is a ${\mathbb Q}$-homology
projective plane with $3$ singular points of type
$\frac{1}{7}(1,5)$. The degree $7$ cover $X$ of $Y$ branched at the three singular
points is a fake projective plane.
\item The elliptic fibration on $\tilde{Z}$ has four $I_3$-fibres, whose union contains the eight exceptional $(-2)$-curves. 
\item The minimal resolution $\tilde{Y}$ of $Y$ is a
$(2,4)$-elliptic surface, where every fibre of the elliptic
fibration on $\tilde{Z}$ does not split. 
\item The elliptic fibration on $\tilde{Y}$ has four singular fibres, one of type $I_9$ and three of type $I_1$, and each fibre has the same multiplicity as the corresponding fibre on $\tilde{Z}$.
\item The $I_9$-fibre on $\tilde{Y}$ has multiplicity $1$.
\end{enumerate}
\end{theorem}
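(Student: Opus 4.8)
The plan is to prove (5) by contradiction, running the proof of Theorem \ref{2,3}(5) almost verbatim; assertions (1)--(4) are contained in \cite{K08} and \cite{K11} exactly as before. I keep the notation there: $y_1,y_2,y_3\in Y$ the three singular points, $A_{i1},A_{i2},A_{i3}\subset\tilde Y$ the exceptional chain of type $[2,2,3]$ over $y_i$ (so $A_{i1}^2=A_{i2}^2=-2$, $A_{i3}^2=-3$), the $I_9$-fibre written in circular order as $F_0=A_{11}+A_{12}+B_1+A_{21}+A_{22}+B_2+A_{31}+A_{32}+B_3$ with $B_1,B_2,B_3$ the three remaining $(-2)$-curves, $\sigma_3$ rotating $F_0$ by $(A_{i3},B_i)\mapsto(A_{i+1,3},B_{i+1})$, $\pi\colon X\to Y=X/\langle\sigma_7\rangle$, and $x_i:=\pi^{-1}(y_i)$. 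The first point to record is that the canonical bundle formula now gives a general fibre $F\equiv 4K_{\tilde Y}$ (instead of $6K_{\tilde Y}$), since $K_{\tilde Y}\equiv -F+\tilde F_2+3\tilde F_4$ with $F\equiv 2\tilde F_2\equiv 4\tilde F_4$ for the two reduced multiple fibres $\tilde F_2,\tilde F_4$. As a $(2,4)$-elliptic surface carries exactly two multiple fibres, of multiplicities $2$ and $4$, I assume the $I_9$-fibre $F_0$ has multiplicity $m_0\in\{2,4\}$ and derive a contradiction in each case.

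If $m_0=4$, then $F_0\equiv K_{\tilde Y}$, so adjunction on the $(-3)$-curve $A_{i3}$ gives $A_{i3}\cdot F_0=A_{i3}\cdot K_{\tilde Y}=1$; since $A_{i3}\cdot F_0=A_{i3}\cdot A_{i2}+\sum_jA_{i3}B_j=1+\sum_jA_{i3}B_j$, all the numbers $A_{i3}B_j$ vanish, so each $B_j$ meets only its two $F_0$-neighbours $A_{j2}$ and $A_{j+1,1}$ among the exceptional curves of $f$. A short Hirzebruch--Jung computation of the exceptional correction in $f^{*}B_j'=B_j+(\text{exceptional }\mathbb{Q}\text{-divisor})$ then yields $(B_j')^2=(f^{*}B_j')^2=-2+\frac{6}{7}+\frac{5}{7}=-\frac{3}{7}<0$, which is absurd: $Y$ is a projective $\mathbb{Q}$-homology projective plane with $K_Y^2=\frac{9}{7}>0$, so its N\'eron--Severi group has rank one with positive definite intersection form and every non-zero effective curve on $Y$ has positive self-intersection. (Equivalently, $(\pi^{*}B_j')^2=7(B_j')^2=-3$ is impossible for a non-zero effective divisor on the fake projective plane $X$.)

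If $m_0=2$, then $F_0\equiv 2K_{\tilde Y}$, so $A_{i3}\cdot F_0=2$ and $\sum_jA_{i3}B_j=1$ for each $i$; applying Proposition \ref{formula}(1),(2) to $E=B_j$ (with $B_j^2=-2$, $B_jK_{\tilde Y}=0$, $K_Y^2=\frac{9}{7}$, $D'=9$) reproduces verbatim the linear and quadratic system in the $k_{ij}=A_{i3}B_j$ and the leading coefficients $m_j$ that appeared in case (1) of the proof of Theorem \ref{2,3}, whose unique solution is $(k_{ij})=\mathrm{id}$. Hence $B_j'$ passes through exactly the two singular points $y_j,y_{j+1}$, so $x_j,x_{j+1}\in\pi^{*}B_j'$ and $x_{j+2}\notin\pi^{*}B_j'$ (indices mod $3$). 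Pushing $F_0$ forward to $Y$ and pulling back to $X$ gives $\pi^{*}B_1'+\pi^{*}B_2'+\pi^{*}B_3'\equiv 2K_X\equiv 6L$ for an ample generator $L$; since $\Pic(X)/(\text{torsion})\cong\mathbb{Z}$ while $\sigma_3$ cyclically permutes the three curves, each $\pi^{*}B_j'\equiv 2L+t_j$ for a $2$-torsion class $t_j$, and each $t_j$ is $\sigma_7$-invariant because $\pi^{*}B_j'$ is and $\sigma_7^{*}(2L)=2L$ by Lemma \ref{2L}.

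The one genuinely new difficulty --- and the step I expect to be the main obstacle --- is that here $\tilde Y$, hence $Y$, is no longer simply connected ($\pi_1(\tilde Y)=C_2$), so the argument of Theorem \ref{2,3} that kills the twisting torsion outright is unavailable. I would dispose of it using Cartwright--Steger's computation of $H_1(X,\mathbb{Z})$ as an $\Aut(X)$-module in the present case: the $\sigma_7$-fixed part of the $2$-torsion of $X$ is at most one-dimensional over $\mathbb{F}_2$, on which $\sigma_3$ necessarily acts trivially; since $\sigma_3$ cyclically permutes $t_1,t_2,t_3$, this forces $t_1=t_2=t_3$, hence $2t_j=0$. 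Then, taking $g_j\in H^{0}(X,2L+t_j)$ defining $\pi^{*}B_j'$, all of $g_1^2,g_2^2,g_3^2,g_1g_2$ lie in $H^{0}(X,4L)$, and evaluating at $x_1,x_2,x_3$ --- where exactly one of $g_1^2,g_2^2,g_3^2$ is non-zero while $g_1g_2$ vanishes at all three --- shows these four sections are linearly independent, whence $h^{0}(X,4L)\ge 4$, contradicting Lemma \ref{4L}. Both cases being excluded, the $I_9$-fibre on $\tilde Y$ has multiplicity $1$.
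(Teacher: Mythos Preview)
Your proof is correct and follows the paper's overall strategy, with two minor but worth-noting differences.

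For the multiplicity-$4$ case, the paper dispatches it via Proposition~\ref{3}(2)(b): once $A_{i3}F_0=1$ forces all $A_{i3}B_j=0$, the $(-2)$-curve $B_1$ satisfies $\sum_{i}(A_{i1}B_1+A_{i2}B_1+A_{i3}B_1)=2<3$, contradicting that proposition. Your direct computation $(B_j')^2=-2+\tfrac{6}{7}+\tfrac{5}{7}=-\tfrac{3}{7}$ is the same lattice calculation packaged differently, and your positivity argument on the rank-one $\mathbb{Q}$-homology projective plane $Y$ (equivalently $(\pi^*B_j')^2=-3$ on $X$) is a clean substitute; it has the small advantage of being self-contained, not needing to cite Proposition~\ref{3}.

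For the multiplicity-$2$ case your argument coincides with the paper's, except at the torsion step. There the paper does \emph{not} invoke Cartwright--Steger's module computation: it simply observes that $\pi_1(Y)\cong\pi_1(\tilde Y)\cong C_2$ (immediate from $\tilde Y$ being a $(2,4)$-elliptic surface), so $X$ can carry at most one nonzero $\sigma_7^*$-invariant torsion class, necessarily a $2$-torsion. Since both $t$ and $\sigma_3^*(t)$ are $\sigma_7^*$-invariant, they coincide, hence $t_1=t_2=t_3$ and $2t_j=0$, exactly as you need. This is more self-contained than appealing to \cite{CS2} for the $\Aut(X)$-module structure; you may want to replace your justification by this one.
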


\begin{proof} The first 4 assertions are contained in \cite{K08}, Corollary 4.12, and \cite{K11}, Theorem 1.1 and 4.1. We need to prove the assertion (5). 

For a $(2,4)$-elliptic surface the general fibre $F$ is numerically equivalent to $4K$. Thus any $(-3)$-curve is a 4-section, since it has intersection number 1 with the canonical class $K$. 

 If the $I_9$-fibre on $\tilde{Y}$ has multiplicity 4. The $(-3)$-curve $A_{i3}$, being a 4-section, intersects only one component of the  $I_9$-fibre, namely $A_{i2}$, thus the curve $B_1$ does not meet $A_{i3}$ for any $i$ and hence
$$\sum_{i=1}^3(A_{i1}B_1+A_{i2}B_1+A_{i3}B_1)=2,$$ 
contradicting Proposition \ref{3}.

Suppose that the $I_9$-fibre on $\tilde{Y}$ has multiplicity 2. This case can be ruled out by a similar argument as in the $(2, 3)$-elliptic surface case.
 Since a general fibre is numerically equivalent to $4K_{\tilde{Y}}$, we see that $$A_{11}+ A_{12}+ B_1+ A_{21}+ A_{22}+B_2+ A_{31}+A_{32}+B_3=F_0\sim 2K_{\tilde{Y}},$$ hence 
$$\pi^*B'_1+\pi^*B'_2+\pi^*B'_3\sim 2\pi^*K_{Y}\equiv 2K_X.$$
The 3 curves $\pi^*B'_1$, $\pi^*B'_2$, $\pi^*B'_3$ are rotated by the order 3 automorphism $\sigma_3$, but fixed by the order 7 automorphism $\sigma_7$. 
Let $L$ be a fixed ample generator of $\Pic(X)$. Then 
 $$\pi^*B'_1\equiv 2L+t$$
 for some  $\sigma_7^*$-invariant torsion $t$. Since  $\pi^*B'_2$ is also $\sigma_7^*$-invariant and
  $$\pi^*B'_2=\sigma_3^*(\pi^*B'_1)\equiv\sigma_3^*(2L)+\sigma_3^*(t)=2L+\sigma_3^*(t),$$
we see that $\sigma_3^*(t)$ is also $\sigma_7^*$-invariant.  We know that
$$\pi_1(Y)\cong\pi_1(\tilde{Y})\cong C_2,$$ in particular, $X$ may have at most one $\sigma$-invariant torsion, and in that case it must be a 2-torsion. Since
both  $t$ and $\sigma_3^*(t)$ are $\sigma_7^*$-invariant, either $t$ is trivial or 
a $\sigma_3^*$-invariant 2-torsion. It follows that
$$\pi^*B'_1\equiv \pi^*B'_2\equiv\pi^*B'_3\equiv 2L+t.$$
Furthermore, the intersection numbers $k_{ij}:=A_{i3}B_j=\delta_{ij}$. This follows from the exactly same computation as in Theorem \ref{2,3}, since  $A_{i3}$ is a 4-section and  
$2=A_{i3}F_0=k_{i1}+k_{i2}+k_{i3}+1$. The rest is ditto, except that
we choose a section $g_j\in H^0(X, 2L+t)$ that gives the divisor $\pi^*B'_j$. Since $2t$ is trivial, the 4 sections
$$g_1^2,\, g_2^2,\, g_1g_2,\, g_3^2\in H^0(X, 4L).$$  
\end{proof}

\bigskip
{\bf Proof of Theorem \ref{main}.}

\medskip
In this case the proof is simpler than the previous case. If there is a $\sigma_7^*$-invariant curve $C\in |2L|,$ then the same argument shows that the corresponding curve $\tilde{C}\subset \tilde{Y}$ is contained in a union of fibres. On the other hand, the $I_9$-fibre has multiplicity 1 and hence 
$$B'_1+B'_2+B'_3\sim 4K_{Y},$$
$$\pi^*B'_1+\pi^*B'_2+\pi^*B'_3\sim 4\pi^*K_{Y}\equiv 4K_X\sim 12L$$
hence
$$\pi^*B'_1\sim\pi^*B'_2\sim\pi^*B'_3\sim 4L.$$
If $F_1$ is the reduced curve of the fibre with multiplicity 2 or 4, then it is irreducible and the image
$F_1'\subset Y$ satisfies
$F_1'\sim 2K_{Y}$ or $K_Y$ and hence
$$\pi^*F_1'\sim 6L\,\,\,{\rm or}\,\,\,3L.$$
Thus no union of irreducible components of fibres can be equal to 
the curve $\tilde{C}$. The proof is completed.

\begin{remark}\label{rem24} In \cite{K11}, Remark 3.3, the intersection numbers $$k_{ij}:=A_{i3}B_j$$
were computed, where $A_{11}, A_{12}, B_1, A_{21}, A_{22}, B_2, A_{31}, A_{32},
B_3$ are the nine components of the $I_9$-fibre $F_0$ as before.  Using the same notation as before, we
get
$$k_{i1}+k_{i2}+k_{i3}=3,$$ $$m_j=k_{1j}+k_{2j}+k_{3j}+1,$$
$$m_j^2+14=3(k_{1j}^2+k_{2j}^2+k_{3j}^2)+11+4k_{jj}+2k_{j+1,j}.$$
There are many solutions to this system. Among them two are symmetric with respect to the order 3 rotation $(A_{13},
B_1)\to (A_{23}, B_2)\to (A_{33}, B_3)\to (A_{13}, B_1)$;
$$(k_{ij})=
\begin{pmatrix} 1&2&0\\0&1&2 \\ 2&0&1 \end{pmatrix},\,\,\,\begin{pmatrix}0&1&2\\2&0&1 \\1&2&0 \end{pmatrix}.$$ Each of the two cases, if exists, leads to a fake projective plane (\cite{K11} Section 5). According to the computer based computation by Cartwright and Steger \cite{CS2} there is exactly one pair of fake projective planes, complex conjugate to each other, with $\Aut(X)=7:3$ such that the order 7 quotient has fundamental group of order 2. Thus at least one of the two cases for $(k_{ij})$ occurs. If both occur, then  the corresponding fake projective planes must be complex conjugate to each other.
\end{remark}

\subsection{The case of $(3,3)$-elliptic surface}

Cartwright and Steger have computed the fundamental groups of all quotients of fake projective planes \cite{CS2}. According to their group theoretic computation based on computer, the quotient of a fake projective plane by an order 7 automorphism, if it admits, has fundamental group either trivial or cyclic of order 2. This eliminates the possibility of $(3,3)$-elliptic surface case. However we will show that our argument works  even in this case, instead of referring to their work.

First we note that all statements of Theorem \ref{2,4} hold, even if $(2,4)$ is replaced by $(3,3)$. Indeed, for a $(3,3)$-elliptic surface the general fibre $F$ is numerically equivalent to $3K$, thus any $(-3)$-curve is a 3-section, since it has intersection number 1 with $K$.  Then by Proposition \ref{3} the $I_9$-fibre on $\tilde{Y}$ cannot have multiplicity 3, hence 1. 

If there is a $\sigma_7^*$-invariant curve $C\in |2L|$, then the same argument shows that the corresponding curve $\tilde{C}\subset\tilde{Y}$ is contained in a union of fibres. Since a general fibre is numerically equivalent to $3K_{\tilde{Y}}$, we see that
$$A_{11}+ A_{12}+ B_1+ A_{21}+ A_{22}+B_2+ A_{31}+A_{32}+B_3=F_0\sim 3K_{\tilde{Y}},$$
$$B'_1+B'_2+B'_3\sim 3K_{Y},$$
$$\pi^*B'_1+\pi^*B'_2+\pi^*B'_3\sim 3\pi^*K_{Y}\equiv 3K_X,$$
hence
$$\pi^*B'_1\sim\pi^*B'_2\sim\pi^*B'_3\sim K_X\sim 3L.$$
If $F_1$ is the reduced curve of the fibre with multiplicity 3, then it is irreducible and $F_1\sim K_{\tilde{Y}}$, so the image  $F_1'\subset Y$ satisfies $F_1'\sim K_{Y}$ and hence $$\pi^*F_1'\sim K_X\sim 3L.$$ Thus no union of irreducible components of fibres can be equal to the curve $\tilde{C}$.

\section{Proof of Theorem \ref{main2}}

Throughout this section $X$ will denote a fake projective plane with $\Aut(X)\cong C_3^2$. \\
According to the computation of Cartwright and Steger \cite{CS2}, $X$ has $$H_1(X, \mathbb{Z})=C_7,\,\,C_{14}\,\,\,{\rm or}\,\,C_2^{2}\times C_{13}.$$  In particular $X$ has no 3-torsion, hence has a unique cubic root of $K_X$. 

Let $L$ be an ample generator of $\Pic(X)$  such that $\tau^*(2L)\cong 2L$ for all $\tau\in 
Aut(X)$. Such an $L$ exists, e.g. a cubic root of $K_X$ (Lemma \ref{L0}). 

Pick two automorphisms $\sigma$ and $\sigma'$ such that
$$\Aut(X)=\langle \sigma,\,\,\, \sigma' \rangle\cong C_3^2.$$
By Lemma \ref{H02}, it is enough to show that $H^0(X, 2L)=0$. Suppose that $$H^0(X, 2L)\neq 0.$$  Let $x_1,\, x_2,\, x_3\in X$ be the three fixed points of $\sigma$, $${\rm Fix}(\sigma)=\{x_1,\,\, x_2,\,\, x_3\}\subset X.$$ Then $\sigma'$  rotates $x_1, x_2, x_3$ and fixes three points, different from $x_i$'s (see subsection (1.1) or \cite{K08}). Note that $\Aut(X)$  acts on the projective space $${\bf P}H^0(X, 2L).$$ An automorphism of finite order of a projective space has a fixed point, thus there is a curve, possibly reducible, $$C\in |2L|\,\,\,{\rm with}\,\,\,\sigma^*(C)=C.$$ Let $$\pi: X\to X/\langle\sigma\rangle=Y$$ be the quotient map. Then there is a curve $$C'\subset Y=X/\langle\sigma\rangle$$ such that $C=\pi^*C'$. Since $C'^2=4/3$, we see that
$$C'\sim \dfrac{2}{3}K_Y.$$ Let $y_i\in Y$ be the image of $x_i$. Then $y_i$ is a singular point of type $\frac{1}{3}(1,2)$. Note that
$$C\equiv \sigma'^*(C)\equiv \sigma'^{2*}(C)\equiv 2L.$$

\medskip
{\bf Claim}: $C$ passes through exactly two of the three points $x_1, x_2, x_3\in X$. 

\medskip\noindent
This is equivalent to say that $C'$ passes through exactly two of the three singular points $y_1, y_2, y_3\in Y$. To see this, 
let $A_{i1}, A_{i2}\subset \tilde{Y}$ be the exceptional $(-2)$-curves over $y_i$.
Let $\tilde{C}\subset \tilde{Y}$ be the proper transform of $C'$ under the resolution $$f: \tilde{Y}\to Y.$$ Then
$$\tilde{C}\sim f^*C'-\sum_{k=1}^3{\mathcal{C}_k}$$
where $\mathcal{C}_k$ is a $\mathbb{ Q}$-divisor supported on  $A_{k1}\cup A_{k2}$, which can be computed as follows:
$$A_{k1}\tilde{C}=a,\,\, A_{k2}\tilde{C}=b \Longleftrightarrow \mathcal{C}_k=\frac{(2a+b)}{3}A_{k1}+\frac{(a+2b)}{3}A_{k2}.$$ In this case 
$$\mathcal{C}_k^2=-\frac{2}{3}(a^2+b^2+ab)=-\frac{2}{3}, \,\,-\frac{6}{3},\,\,-\frac{8}{3},\,\,-\frac{14}{3},\ldots.$$ It follows that 
$$\mathcal{C}_k^2=-\frac{2}{3}\Longleftrightarrow a+b=A_{k1}\tilde{C}+A_{k2}\tilde{C}=1 \Longleftrightarrow {\rm mult}_{x_k}C=1.$$
We compute
$$\tilde{C}^2=C'^2+\sum_{k=1}^3{\mathcal{C}_k^2}=\frac{4}{3}+\sum_{k=1}^3{\mathcal{C}_k^2}.$$ Note that $C$ passes through $x_k$ if and only if $\mathcal{C}_k\neq 0$. It is easy to check that $\frac{4}{3}-\frac{2}{3}(a^2+b^2+ab)$ is not an integer for any integer $a, b\ge 0$.   Since $\tilde{C}^2$ is an integer, at least two of $\mathcal{C}_1$,  $\mathcal{C}_2$,  $\mathcal{C}_3$ are non-zero, i.e. $C$ passes through at least two of the three points $x_1, x_2, x_3$. Suppose $x_1, x_2, x_3\in C$. 
If $C$ has multiplicity $\ge 2$ at $x_3$, then the curve $\sigma'^*(C)$ has multiplicity $\ge 2$ at $x_2$, hence
$$4=\sigma'^*(C)C\ge\sum_{k=1}^3{\rm mult}_{x_k}\sigma'^*(C)\cdot {\rm mult}_{x_k}C\ge 1+2+2,$$ absurd. If $C$ has multiplicity 1 at each $x_k$, then $\tilde{C}^2=\frac{4}{3}-\frac{2}{3}\cdot 3=-\frac{2}{3}$, not an integer. 
This proves Claim. 

Now we may assume that $x_1,\, x_2 \in C,\,\,x_3\notin C.$ Then
$$x_3,\, x_1 \in  \sigma'^*(C),\,\,x_2\notin  \sigma'^*(C),$$ $$x_2,\, x_3 \in  \sigma'^{2*}(C),\,\,x_1\notin  \sigma'^{2*}(C).$$  This proves that $\dim H^0(X, 2L)\ge 3$, hence $h^0(X, 4L)\ge 4$, contradicting Lemma \ref{4L}. Indeed,   
if $g_1, g_2, g_3\in H^0(X, 2L)$ are sections giving the divisor $C$, $\sigma'^*(C)$, $\sigma'^{2*}(C)$, respectively, then the 4 sections
$$g_1^2,\, g_2^2,\, g_1g_2,\, g_3^2\in H^0(X, 4L)$$  
are linearly independent. 

\section{Exceptional sequences on a fake projective plane}

Let 
$D^b(coh(W))$ denote the bounded derived category of coherent sheaves on a smooth variety $W$. It is a triangulated category.

An object $E$ in a triangulated category is called exceptional if $Hom(E, E[i])=\mathbb{C}$ if $i=0$, and $=0$ otherwise.

A sequence $E_1,\ldots, E_n$ of exceptional objects is called an exceptional sequence if  $Hom(E_j, E_k[i])=0$ for any $j>k$, any $i$.

When $W$ is a smooth surface with $p_g=q=0$, every line bundle is an exceptional object in $D^b(coh(W))$.

Let $X$ be a fake projective plane and $L$ be an ample generator of $\Pic(X)$. The three line bundles
$$2L,\,\,L,\,\,\mathcal{O}_X$$ form an exceptional sequence if and only if $H^j(X, 2L)=H^j(X, L)=0$ for all $j$. Thus Corollary \ref{cor} follows from Theorem 
\ref{main} and \ref{main2}.

 Write
$$D^b(coh(X))=
\langle 2L, \,\,L,\,\, \mathcal{O}_X,\,\,\mathcal{A}\rangle$$
where $\mathcal{A}$ is the orthogonal complement of the admissible triangulated subcategory generated by $2L, \,\,L,\,\, \mathcal{O}_X$. Then the Hochshield homology $$HH_*(\mathcal{A})=0.$$ This can be read off from the Hodge numbers. In fact, the Hochshield homology of $X$ is the direct sum of Hodge spaces $H^{p,q}(X)$, and its total dimension is the sum of all Hodge numbers. The latter is equal to the topological Euler number $c_2(X)$, as a fake projective plane has Betti numbers $b_1(X)=b_3(X)= 0$.  

The Grothendieck group $K_0(X)$ has filtration
$$K_0(X)=F^0K_0(X)\supset F^1K_0(X)\supset F^2K_0(X)$$ with
$$F^0K_0(X)/F^1K_0(X)\cong CH^0(X)\cong \mathbb{Z},$$
$$F^1K_0(X)/F^2K_0(X)\cong \Pic(X),$$
$$F^2K_0(X)\cong CH^2(X).$$
If the Bloch conjecture holds for $X$, i.e. if  $CH^2(X)\cong \mathbb{Z}$, then 
 $K_0(\mathcal{A})$ is finite.


\end{document}